\newtheorem{theorem}{Theorem}[section]
\newtheorem{corollary}[theorem]{Corollary}
\newtheorem{lemma}[theorem]{Lemma}
\newtheorem{proposition}[theorem]{Proposition}
\theoremstyle{remark}
\newtheorem*{remark}{Remark}
\theoremstyle{definition}
\newtheorem*{definition}{Definition}
\numberwithin{equation}{section}
\def\rit{\mathbb R}
\def\ds{\displaystyle}
\def\eps{\varepsilon}
\newcommand{\paral}{\|}
\def\blfootnote{\xdef\@thefnmark{}\@footnotetext}
\title[$L^2$ Stability Estimates for Shocks]{$L^2$ Stability Estimates for Shock Solutions of Scalar Conservation Laws Using the Relative Entropy Method} 
\author[N. Leger]{Nicholas Leger}
\address{Dept. of Mathematics, 1 University Station C1200, Austin, TX 78712-0257}
\email{nleger@math.utexas.edu}
\begin{document}
\bibliographystyle{plain}

\maketitle
\begin{abstract}
We consider scalar nonviscous conservation laws with strictly convex flux in one spatial dimension, and we investigate the behavior of bounded $L^2$ perturbations of shock wave solutions to the Riemann problem using the relative entropy method. We show that up to a time-dependent translation of the shock, the $L^2$ norm of a perturbed solution relative to the shock wave is bounded above by the $L^2$ norm of the initial perturbation.
\end{abstract}

\medskip
\noindent {Keywords:} conservation laws; relative entropy method; riemann problem; $L^2$ stability; shocks.
\vskip 0.4cm

\medskip
\noindent
{AMS Subject Classification:}
 35L45, %Initial value problems for hyperbolic systems of first-order PDE
 35L65, %Conservation laws
 35L67. %Shocks and singularities
\vskip 0.4cm

%-----------------------------------------------------------------------------%
%-----------------------------------Introduction------------------------------%
%-----------------------------------------------------------------------------%
\section{Introduction}\label{intro}
For scalar nonviscous conservation laws with general flux, it is well-known from the theory of Kru\v{z}kov \cite{Kruzkov} that the solution operator for the initial value problem
\begin{equation}\label{introcl}\left\{\begin{array}{l}
\partial_t U + \partial_x A(U) = 0;\\[0.2 cm]
U(x, 0) = U^0(x),
\end{array}\right.
\end{equation}
forms an $L^1$-contraction semigroup. As a result, the measure in $L^1(\rit)$ of the difference of any pair of entropy solutions is non-increasing over time. In particular, if $\phi(x-\sigma t)$ is a shock wave and $U^0-\phi \in L^1(\rit)$, then
\begin{align}\label{kruzkovestimate}
\paral U(\cdot , t) -\phi(\cdot -\sigma t) \paral_{L^1(\rit)} \leq \paral U^0 -\phi \paral_{L^1(\rit)}
\end{align}
for all $t\geq0$. 
While Kru\v{z}kov's estimate is only valid for scalar equations (Temple \cite{Temple}), global stability estimates for shocks with respect to the $L^1$ metric have also been obtained for hyperbolic systems of conservation laws, at least for sufficiently small perturbations of suitably weak shock waves. One such result is given by Bressan et al. in \cite{Bressan}, where the authors establish $L^1$ stability estimates corresponding to an "almost" contractive semigroup structure.

On the other hand, for systems of conservation laws with a convex extension, it is well-known that the relative entropy method developed by Dafermos \cite{Dafermos3} and DiPerna \cite{DiPerna} provides $L^2$ stability estimates for solutions away from shocks. As illustrated in \cite{Dafermos4}, one gets estimates of the form
\begin{align}
\paral U( \cdot, t) - \overline{U}(\cdot, t) \paral_{L^2([-R,R])} \leq a e^{bt} \paral U^0 - \overline{U}^0 \paral_{L^2([-R-st, R+st])},
\end{align}
where $U$ and $\overline{U}$ are weak and strong solutions, respectively, and where $a$, $b$, and $s$ are constants depending on the initial data $U_0$ and $\overline{U}_0$.  However, simple examples show that this kind of result cannot hold when $\overline{U}$ is a shock wave or more generally when $\overline{U}$ is only a weak solution of the conservation law. Nonetheless, as we will prove, one can expect similar estimates to hold up to a suitable translation of the shock.

Consider the initial value problem (\ref{introcl}) for a scalar conservation law in one space dimension. Our goal is to prove the the following global $L^2$ stability estimate for shocks.
%Our main result is the following theorem.
%As our main result we obtain the following global $L^2$ stability estimate for shocks.
\begin{theorem}\label{strong}
Let $U^0 \in L^{\infty}(\rit)$ and assume $U^0 -\phi \in L^2(\rit)$ where
\begin{align}\label{riemanndata}
\phi(x) = 
\begin{cases}
C_L, &\text{if $x<0$;}\\
C_R, &\text{if $x>0$,}\\
\end{cases}
\end{align}
with $C_L > C_R$. Further, assume $U$ is the unique entropy solution of (\ref{introcl}), for a smooth flux function $A: \rit \to \rit$ verifying $A^{\prime \prime} > 0$. Then there exists a Lipschitz continuous function $\overline{x}: [0,\infty) \to \rit$ and a constant $\lambda(\paral U^0 \paral_{L^\infty};\phi; A)$ $>0 $ such that 
\begin{align}\label{mainestimate}
\paral U(\cdot , t) -\phi(\cdot -\sigma t - \overline{x}(t)) \paral_{L^2(\rit)} \leq \paral U^0 -\phi \paral_{L^2(\rit)}
\end{align}
and 
\begin{align}\label{introshiftbound}
\vert \overline{x}(t) \vert \leq \lambda \paral U^0 -\phi \paral_{L^2(\rit)} \sqrt{t}
\end{align}
for all $t \geq 0$, where $\sigma$ is given by the relation $\sigma (C_L - C_R) = A(C_L)-A(C_R)$.
\end{theorem}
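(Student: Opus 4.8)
The plan is to use the relative entropy method with the quadratic entropy $\eta(U) = U^2/2$ (whose associated entropy flux is $q(U) = \int^U s A'(s)\,ds$) and to track the relative entropy between the solution $U$ and a *shifted* shock profile $\phi(x - \sigma t - \overline{x}(t))$. Writing $\eta(U\mid V) = \eta(U) - \eta(V) - \eta'(V)(U-V) = \tfrac12(U-V)^2$, the key quantity is $E(t) = \int_{\rit} \eta\big(U(x,t)\mid \phi(x-\sigma t - \overline{x}(t))\big)\,dx$, and one wants to show $E$ is non-increasing for a suitable choice of $\overline{x}$. The first step is to compute $\frac{d}{dt}E(t)$ distributionally. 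Away from the shock, $\phi$ is constant, so the relative entropy inequality for $U$ against a constant state gives a non-positive contribution on each side; the only dangerous term is the one localized at the shock location $y(t) := \sigma t + \overline{x}(t)$, which will involve the one-sided traces $U^-(t) = U(y(t)^-, t)$ and $U^+(t) = U(y(t)^+, t)$ of the entropy solution, the shock speed $\sigma + \dot{\overline{x}}(t)$, and the Rankine–Hugoniot / entropy structure.

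**The dissipation at the shock and the choice of the shift.**

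Carrying out the computation, the boundary term at $x = y(t)$ should take the form $\dot{\overline{x}}(t)\,[\eta(U\mid\phi)] - [q(U\mid\phi)] + (\text{terms from the jump of }\phi)$, where brackets denote the jump across $y(t)$, i.e. value at $C_R$-side minus value at $C_L$-side, evaluated with the traces $U^\pm$. The crucial algebraic fact — this is where strict convexity $A'' > 0$ enters decisively — is that the relative entropy flux $q(U\mid C_L) - q(U\mid C_R) - (\text{linear correction})$, together with the Rankine–Hugoniot relation $\sigma(C_L-C_R) = A(C_L)-A(C_R)$, makes the total boundary contribution a strictly concave/convex function of the traces that can be made non-positive by choosing $\dot{\overline{x}}(t)$ appropriately. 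Concretely, I expect to define $\dot{\overline{x}}(t)$ as a Lipschitz (indeed bounded, by $\|U^0\|_{L^\infty}$, $\phi$, and $A$) function of the traces $U^\pm(t)$ — morally the velocity that "pins" the shock so that the flux terms balance — e.g. solving $\dot{\overline{x}}(t) \big(\eta(U^+\mid C_R) - \eta(U^-\mid C_L)\big) = \big(q(U^+\mid C_R) - q(U^-\mid C_L)\big) - (\text{RH terms})$, and then verifying via convexity that with this choice the right-hand side of the differential inequality for $E(t)$ is $\le 0$. One must check that the denominator does not vanish and that the resulting velocity is bounded, which follows from $a$ priori $L^\infty$ bounds on $U$ (maximum principle) together with $C_L > C_R$ and $A'' > 0$; this also gives the Lipschitz regularity of $\overline{x}$.

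**Integrating and extracting the bounds.**

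Once $\frac{d}{dt}E(t) \le 0$ is established in the sense of distributions (with $E$ absolutely continuous, or at least via an approximation argument using smooth truncations and the Kruzkov theory / finite speed of propagation to justify the integration by parts on $\rit$), integrating from $0$ to $t$ gives $E(t) \le E(0)$, and since $E(0) = \tfrac12\|U^0 - \phi\|_{L^2}^2$ because $\overline{x}(0)=0$, this is exactly \eqref{mainestimate}. For the shift bound \eqref{introshiftbound}, I would use that $|\dot{\overline{x}}(t)|$, while bounded, is not small, but that it is controlled by a quantity that is integrable in a weighted sense: more precisely, the strict concavity at the shock should give not just $\frac{d}{dt}E \le 0$ but $\frac{d}{dt}E(t) \le -c\,|\dot{\overline{x}}(t)|\,|U^+(t) - U^-(t)|$ or a similar coercive bound, so that $\int_0^t |\dot{\overline{x}}(s)|\,(\text{something})\,ds \le E(0)$; combined with a lower bound on the jump $|U^+ - U^-|$ — or, if the jump can degenerate, a Cauchy–Schwarz argument $|\overline{x}(t)| \le \int_0^t |\dot{\overline{x}}| \le \big(\int_0^t |\dot{\overline{x}}|^2\big)^{1/2}\sqrt{t}$ together with $\int_0^t|\dot{\overline{x}}|^2 \lesssim E(0)$ — yields $|\overline{x}(t)| \le \lambda \|U^0-\phi\|_{L^2}\sqrt{t}$.

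**Main obstacle.**

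The main obstacle is the rigorous treatment of the boundary term at the moving shock: the entropy solution $U$ only has one-sided traces in a weak (measure-theoretic / Panov) sense, the shift $\overline{x}(t)$ is defined implicitly through an ODE whose right-hand side depends on these traces, so one must establish existence, uniqueness, and Lipschitz regularity of $\overline{x}$ simultaneously with the entropy estimate — likely via a fixed-point or a priori-estimate argument, or by working with a smoother approximation (vanishing viscosity) and passing to the limit. Making the "choose $\dot{\overline{x}}$ to kill the flux term, then use $A''>0$ for sign-definiteness" step precise — i.e. exhibiting the exact algebraic identity that shows the shock dissipation is non-positive for the chosen velocity and genuinely uses strict convexity and the ordering $C_L > C_R$ — is the technical heart of the argument.
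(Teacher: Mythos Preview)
Your overall framework---relative entropy with $\eta(U)=U^2/2$, reduce $\frac{d}{dt}E(t)$ to a boundary term at the shifted shock, choose the shift to make that term non-positive---is the right one and matches the paper. But the implementation you sketch has a real gap, and the paper handles it quite differently.

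\textbf{The single-shift ODE is singular.} You propose to define $\dot{\overline x}$ by solving
\[
\dot y\,\bigl[\eta(U^+\mid C_R)-\eta(U^-\mid C_L)\bigr]=\text{(flux terms)},
\]
and you claim the denominator cannot vanish because of the $L^\infty$ bound, $C_L>C_R$, and $A''>0$. This is false: with $\eta(U\mid C)=\tfrac12(U-C)^2$, the denominator is $\tfrac12[(U^+-C_R)^2-(U^--C_L)^2]$, which vanishes whenever $|U^+-C_R|=|U^--C_L|$---in particular at the unperturbed shock $U^-=C_L$, $U^+=C_R$, and along an entire codimension-one set of trace values. So the velocity you write down is not bounded, the ODE is not well-posed as stated, and the Lipschitz regularity of $\overline x$ does not follow. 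You also do not address that when $y(t)$ sits on a genuine discontinuity of $U$, the velocity $\dot y$ is \emph{forced} to equal the local Rankine--Hugoniot speed (this is Lemma~3.1 in the paper), so you are not free to prescribe it.

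\textbf{How the paper avoids this.} The paper does not use a single shift. It introduces \emph{two} curves $x_L(t)$ and $x_R(t)$, each solving a Filippov differential inclusion $\dot x\in[f(U(x{+},t),C),\,f(U(x{-},t),C)]$ with $C=C_L$ and $C=C_R$ respectively, where $f(U,C)=F(U,C)/\eta(U\mid C)$ is the \emph{normalized} relative entropy flux. The point is that $f$ extends continuously across $U=C$ (with value $A'(C)$) and is globally Lipschitz in $U$ on bounded sets, so each ODE is well-posed with bounded velocity. Each curve by itself makes one half of the outer entropy non-increasing. The key new step---and the real content of the proof---is then to show $x_R(t)\le x_L(t)$ for all $t$, which the paper does by a contradiction argument exploiting the strict monotonicity of $f$ in $C$ (Lemma~2.4 and Proposition~3.7). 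Any Lipschitz $\overline x$ with $x_R\le \sigma t+\overline x\le x_L$ then works.

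\textbf{The shift bound.} Your proposed route to \eqref{introshiftbound}---either a coercive dissipation bound giving $\int_0^t|\dot{\overline x}|^2\lesssim E(0)$, or control of $\int|\dot{\overline x}|\cdot|U^+-U^-|$---is not what the paper does and would need substantial further argument (neither inequality is obvious, and the jump $|U^+-U^-|$ can degenerate). The paper instead writes $(C_L-C_R)|\overline x(t)|=\|\phi(\cdot-\sigma t-\overline x(t))-\phi(\cdot-\sigma t)\|_{L^1}$, inserts $U(\cdot,t)$ by the triangle inequality, uses Kru\v{z}kov's $L^1$ contraction on a ball of radius $O(t)$, and then H\"older to convert $L^1$ on that ball into $L^2$ times $\sqrt{t}$.
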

\noindent

In the same spirit as \cite{Bressan}, our result can be characterized as an almost contractive variation of Kru\v{z}kov's estimate (\ref{kruzkovestimate}). In fact, we will prove a slightly more general result (Theorem \ref{strong2}) in Section \ref{maintheorem} which takes into account all strictly convex entropies associated with (\ref{introcl}). However, while Theorem \ref{strong2} is interesting in its own right, it is important to keep in mind that the estimates gained from the relative entropy method are purely of type $L^2$, regardless of the specific convex entropy used.

In a related result, Chen et al. \cite{Chen1} have used the relative entropy method to obtain stability estimates for shocks in the context of gas dynamics. Specifically, the authors establish the time-asymptotic stability of Riemann solutions with arbitrarily large oscillation for the $3 \times 3$ system of Euler equations in one space dimension. The present work is another attempt at developing a stability theory for shocks using relative entropy techniques, beginning with a treatment of scalar equations, as proposed by Vasseur in \cite{Handbook}. For further applications of the relative entropy method in the context of fluid dynamics and kinetic theory, we refer the reader to the papers \cite{Vasseur, Vasseur3, Brenier1, Brenier2, Otto, Vasseur2, Yau}.

Let us clarify a few things regarding estimates (\ref{mainestimate}) and (\ref{introshiftbound}). First, we are interested in controlling the relative entropy globally in time. Therefore, unlike the formulation of many large-time stability results (cf. \cite{Serre, Oleinik, Jones, Liu1, Liu2}), we are forced to take a $\mathit{time}$-$\mathit{dependent}$ translation of the shock. Also, we should point out that (\ref{introshiftbound}) is only intended to show that the shift $\overline{x}(t)$ has a bound proportional to the size of the initial perturbation. The estimate captures neither the Lipschitz continuity of $\overline{x}$ at $t=0$, nor the expected large-time behavior of $\overline{x}$. Indeed, based on our techniques, it is reasonable to expect the time-asymptotic convergence of $\overline{x}(t)$ to the shift, $x_0$, considered in the large-time stability analysis of Il'in and Oleinik \cite{Oleinik}, at least when the initial perturbation is in $L^1(\rit)$. 
%Briefly their result states that the effects of a bounded integrable perturbation vanish in the supremum norm, up to a %fixed translation of the shock, as time approaches infinity.

The proof of Theorem \ref{strong} relies heavily on the work of Dafermos; in particular, on the theory of generalized characteristics and contingent equations considered in \cite{Dafermos1}. Roughly, the idea is to find curves $x_L(t)$ and $x_R(t)$, initially positioned at the origin, which preserve the quantities
\begin{align*}
\displaystyle \mathcal{E}_L(t) &= \int_{-\infty}^{x_L(t)} \left( U(x,t) - C_L \right)^2 \,dx, \text{ and}\\[0.2 cm]
\displaystyle \mathcal{E}_R(t) &= \displaystyle \int_{x_R(t)}^{\infty} \left( U(x,t) - C_R \right)^2 \,dx.
\end{align*}
Quite surprisingly, this condition leads to the constraint $x_R(t) \leq x_L(t)$ for all $t\geq0$, so that (\ref{mainestimate}) holds for all functions $\overline{x}(t)$ satisfying  $x_R(t) - \sigma t \leq  \overline{x}(t) \leq x_L(t) - \sigma t$. 
%Estimate (\ref{introshiftbound}) is then an easy consequence of the local $L^1$ stability of entropy solutions and %the finite shock speed $\sigma$. 
%finite propagation speed of $\phi(x-\sigma t)$.
The bound on $\overline{x}$ then follows easily from the local $L^1$ stability of entropy solutions.

The paper is organized as follows. In Section \ref{prelim}, we introduce the entropy and relative entropy inequalities associated with the conservation law (\ref{introcl}), and establish some preliminary estimates related to those quantities. In Section \ref{maintheorem}, we describe our method in detail and present the proof of the main theorem. Finally, we include in the appendix an existence result for differential inclusions arising in the context of conservation laws.

%%%%%%%%%%%%%%%%%%%%%%%%%%%%%%%%%%%%%%%%%%%%%%%%%%%%%%%%%%
\section{Relative Entropy Estimates}\label{prelim}
%%%%%%%%%%%%%%%%%%%%%%%%%%%%%%%%%%%%%%%%%%%%%%%%%%%%%%%%%
Consider the scalar conservation law
\begin{align}\label{cl2}
\partial_t U + \partial_x {A(U)} = 0,
\end{align}
with smooth flux $A: \rit \to \rit$. We say that $U \in L^{\infty}( \rit \times (0,\infty))$ is an entropy solution of (\ref{cl2})
if $U$ satisfies
\begin{align*}
\partial_t {\eta (U)} + \partial_x {G(U)} \leq 0, 
\end{align*}
in the sense of measures, for all entropy/entropy-flux pairs $(\eta, G) \in [C^{\infty}(\rit)]^2$ verifying
$\eta^{\prime \prime} \geq 0$ and $G^{\prime} = \eta^{\prime} A^{\prime}$.
If $\overline{U}$ solves (\ref{cl2}) in the classical sense, and we consider, associated to each convex $\eta$, the relative entropy function
\begin{align*}
\eta(U \mid \overline{U}) &= \eta(U) - \eta(\overline{U}) - \eta^{\prime}(\overline{U}) \cdot (U - \overline{U}),
\end{align*}
then an entropy solution $U$ verifies additionally the inequality
\begin{align}\label{relative2}
\partial_t \left[ \eta(U \mid \overline{U}) \right] &+ \partial_x \left[ G(U) - G(\overline{U}) - \eta^{\prime}(\overline{U}) \cdot (A(U) - A(\overline{U})) \right] \\
&\leq -\partial_t \left[ \eta^{\prime}(\overline{U}) \right] \cdot (U - \overline{U}) -\partial_x \left[ \eta^{\prime}(\overline{U}) \right] \cdot (A(U) - A(\overline{U})), \notag
\end{align}
for the same entropy/entropy-flux pairs $(\eta, G)$.

The idea of the relative entropy method is to use (\ref{relative2}) to estimate the quantity $\int \eta(U \mid \overline{U}) \,dx$ in time. When $\eta(U) = U^2$, this corresponds directly to estimates in the $L^2$ metric. In this paper, we will only consider the case $\overline{U}(x,t) = C$, for constant states $C \in \rit$, so that (\ref{relative2}) reduces to
\begin{align}\label{relative3}
\partial_t \eta(U \mid C) + \partial_x  F(U,C) \leq 0,
\end{align}
where
\begin{align}\label{entropyflux}
F(U,C) &= G(U) - G(C) - \eta^{\prime}(C) \cdot (A(U) - A(C)) \notag \\[0.2 cm]
&=\displaystyle \int_C^U (\eta^{\prime}(w) - \eta^{\prime}(C))A^{\prime}(w) \,dw ,
\end{align}
represents the flux of relative entropy. 

The goal of this section is to establish some preliminary estimates related to relative entropy inequality (\ref{relative3}). Before presenting those results, let us remind the reader of the following facts.
\begin{remark}\label{remark1}
Throughout the paper, we assume that $U$ is an entropy solution of (\ref{cl2}) with initial data $U^0 \in L^{\infty}(\rit)$. We assume also that the flux function, $A:\rit \to \rit$, is strictly convex and smooth, so that $A^{\prime \prime} \circ U \geq \alpha > 0$. With these assumptions, Oleinik's estimate implies that $U(\cdot, t) \in BV_{loc}(\rit)$ for all $t>0$. Therefore, the one-sided limits $U(x-, t)$ and $U(x+,t)$ exist and verify $U(x-, t) \geq U(x+,t)$ for all $x \in \rit$ and for all $t>0$.  Furthermore, the trace theorem of Vasseur \cite{Vasseur4} shows that $U(\cdot, t)$ is countinuous with values in $L^1_{loc}(\rit)$ up to $t=0$.
%These facts play a fundamental role in the results to follow.
\end{remark}
We begin with the following estimate, which is an adaptation of a key lemma in \cite{Dafermos1} [Lemma 3.2] as it applies to (\ref{relative3}).

%Let $U \in BV_{loc}$ be an entropy solution of (\ref{cl2}) and assume $\eta$ is convex.
\begin{lemma}\label{innerentropy}
Assume $\eta: \rit \to \rit$ is smooth and convex, and let $x_L: [t_0, T] \to \rit$ and $x_R: [t_0, T] \to \rit$ be Lipschitz continuous functions such that for all $t \in [t_0, T]$,  $x_R(t) - x_L(t) \geq \delta$ for a fixed $\delta>0$. Then for $C \in \rit$ and for all $a$ and $b$ with $t_0 \leq a < b \leq T$,
\begin{align}\label{entropy2b}
\displaystyle \int_{x_L(b)}^{x_R(b)} \eta(U(x,b) & \mid C)  \,dx - \displaystyle \int_{x_L(a)}^{x_R(a)} \eta(U(x,a) \mid C) \,dx\\
&\leq \displaystyle \int_a^b F(U(x_L(t)+,t),C) - \dot{x_L}(t) \eta(U(x_L(t)+,t) \mid C) \,dt \notag\\
&- \displaystyle \int_a^b F(U(x_R(t)-,t),C) - \dot{x_R}(t) \eta(U(x_R(t)-,t) \mid C) \,dt, \notag
\end{align}
for any entropy solution $U \in L^{\infty}( \rit \times (0,\infty))$ of (\ref{cl2}).
\end{lemma}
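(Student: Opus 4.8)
The plan is to read inequality~(\ref{relative3}) as the statement that the space--time vector field $\big(F(U,C),\,\eta(U\mid C)\big)$ has nonpositive divergence in the sense of measures, and then to integrate this over the curvilinear trapezoid
\[
\Omega=\{(x,t):\ a\le t\le b,\ x_L(t)\le x\le x_R(t)\}.
\]
Applied \emph{formally}, the divergence theorem on $\Omega$ reproduces~(\ref{entropy2b}) exactly: the horizontal faces $t=a$ and $t=b$ contribute $-\int_{x_L(a)}^{x_R(a)}\eta(U(x,a)\mid C)\,dx$ and $+\int_{x_L(b)}^{x_R(b)}\eta(U(x,b)\mid C)\,dx$, while the lateral faces $x=x_L(t)$ and $x=x_R(t)$, whose outward normals are proportional to $(-1,\dot x_L(t))$ and $(1,-\dot x_R(t))$, produce flux integrals which, after rearrangement, become the two $\int_a^b(\cdot)\,dt$ terms on the right of~(\ref{entropy2b}), evaluated at the one-sided traces $U(x_L(t)+,t)$ and $U(x_R(t)-,t)$. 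Since $U$ is only $BV_{loc}$ (Remark~\ref{remark1}), the classical divergence theorem does not apply directly, so the computation must be realized as a limit of tested inequalities; the one genuinely delicate point is the identification of the correct traces on the lateral faces.

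To make this precise I would test~(\ref{relative3}) against the nonnegative functions $\varphi_n=\chi_n(t)\,\psi_n(x,t)$, where $\chi_n$ is smooth, increases from $0$ to $1$ across a shrinking right-neighborhood of $t=a$ and decreases back to $0$ across a shrinking left-neighborhood of $t=b$ (the endpoints, including the case $t_0=0$, being handled by the $L^1_{loc}$-in-time continuity of $U$ recalled in Remark~\ref{remark1}), and where
\[
\psi_n(x,t)=\Theta_n\!\big(x-x_L(t)\big)\,\Theta_n\!\big(x_R(t)-x\big),
\]
with $\Theta_n:\rit\to[0,1]$ smooth and nondecreasing, $\Theta_n\equiv 0$ on $(-\infty,0]$ and $\Theta_n\equiv 1$ on $[1/n,\infty)$ (a preliminary mollification of the Lipschitz maps $x_L,x_R$ in $t$ is harmless, since $\dot x_L,\dot x_R\in L^\infty$). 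Because $x_R(t)-x_L(t)\ge\delta$, for $n$ large $\psi_n(\cdot,t)$ is a smooth approximation of $\mathbf 1_{[x_L(t),x_R(t)]}$ whose two transition layers lie \emph{inside} the interval, at distance $O(1/n)$ to the right of $x_L(t)$ and to the left of $x_R(t)$; this one-sided placement is exactly what forces the limit to see $U(x_L(t)+,t)$ and $U(x_R(t)-,t)$, which exist by Remark~\ref{remark1}. Expanding $\partial_t\varphi_n=\chi_n'\psi_n+\chi_n\partial_t\psi_n$ and $\partial_x\varphi_n=\chi_n\partial_x\psi_n$ (with $\partial_t\psi_n=-\dot x_L\,\Theta_n'(x-x_L)\,\Theta_n(x_R-x)+\dot x_R\,\Theta_n(x-x_L)\,\Theta_n'(x_R-x)$), the tested inequality
\[
\iint\big[\eta(U\mid C)\,\partial_t\varphi_n+F(U,C)\,\partial_x\varphi_n\big]\,dx\,dt\ \ge\ 0
\]
decomposes into: a $\chi_n'\psi_n$ piece, tending to $\int_{x_L(a)}^{x_R(a)}\eta(U(x,a)\mid C)\,dx-\int_{x_L(b)}^{x_R(b)}\eta(U(x,b)\mid C)\,dx$ by $L^1_{loc}$-in-time continuity and continuity of $x_L,x_R$; the pieces supported in the left transition layer, which combine into $\iint\chi_n\,\Theta_n'(x-x_L)\,\big[F(U,C)-\dot x_L\,\eta(U\mid C)\big]\,dx\,dt$ and tend to $\int_a^b\big[F(U(x_L(t)+,t),C)-\dot x_L(t)\,\eta(U(x_L(t)+,t)\mid C)\big]\,dt$; the symmetric right-layer pieces, tending to $-\int_a^b\big[F(U(x_R(t)-,t),C)-\dot x_R(t)\,\eta(U(x_R(t)-,t)\mid C)\big]\,dt$; and corner regions where $\chi_n'$ and $\partial_x\psi_n$ are simultaneously nonzero, which have vanishing measure and bounded integrand and drop out. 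Collecting and rearranging gives~(\ref{entropy2b}).

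The main obstacle, and the step at which I would follow \cite{Dafermos1} [Lemma 3.2] closely, is the rigorous passage to the limit in the two lateral terms: one must show that for a.e.\ $t\in[a,b]$ the spatial averages $n\int_{x_L(t)}^{x_L(t)+1/n}\big[F(U,C)-\dot x_L\,\eta(U\mid C)\big]\,dx$, and likewise at $x_R(t)$, converge to the corresponding one-sided trace values, and that the convergence is dominated strongly enough in $t$ to pass the limit under $\int_a^b(\cdot)\,dt$. This is precisely where the $BV_{loc}$ regularity coming from Oleinik's estimate is essential: it supplies both the a.e.\ existence of $U(x_L(t)\pm,t)$ and, via a Fubini/Lebesgue-differentiation argument applied to the measure $|\partial_x U|$ under the Lipschitz change of variables $x\mapsto x-x_L(t)$, the pointwise convergence of the averages, while the bounds on $\paral U\paral_{L^\infty}$, $\dot x_L$, $\dot x_R$ and the smoothness of $A$ and $\eta$ provide the domination. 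The remaining points — continuity of $t\mapsto\int_{x_L(t)}^{x_R(t)}\eta(U(x,t)\mid C)\,dx$ at $t=a,b$, and the vanishing of the corner contributions — are routine.
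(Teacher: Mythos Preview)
Your proposal is correct and follows essentially the same approach as the paper: both test (\ref{relative3}) against a product $\chi(t)\psi(x,t)$ where $\psi(\cdot,t)$ is an \emph{inner} approximation to $\mathbf 1_{[x_L(t),x_R(t)]}$ (transition layers placed just inside the endpoints, which is exactly what forces the traces $U(x_L+,t)$ and $U(x_R-,t)$), and then pass to the limit. The only differences are cosmetic---the paper uses explicit piecewise-linear cutoffs $\psi_\eps,\chi_\eps$ rather than your smooth $\Theta_n$, and leaves the trace-identification details to \cite{Dafermos1}---while you spell out the $BV$/domination argument in more detail.
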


\begin{proof}
For $\eps < \frac{\delta}{2}$, let $\psi_{\eps}(\cdot,t)$ be an "inner" approximation to the characteristic function on $[x_L(t), x_R(t)]$ (instead of the "outer" approximation taken in \cite{Dafermos1}). Specifically, let
\begin{align*}
\psi_{\eps}(x,t) = 
\begin{cases}
0, &\text{if $x<x_L(t)$;}\\
\frac{1}{\eps}(x-x_L(t)), &\text{if $x_L(t) \leq x < x_L(t) + \eps$;}\\
1, &\text{if $x_L(t) + \eps \leq x \leq x_R(t) - \eps$;}\\
-\frac{1}{\eps}(x-x_R(t)), &\text{if $x_R(t) - \eps < x \leq x_R(t)$;}\\
0, &\text{if $x_R(t) < x$.}\\
\end{cases}
\end{align*}

\noindent
Also, as in \cite{Dafermos1} let
\begin{align*}
\chi_{\eps}(t) = 
\begin{cases}
0, &\text{if $t<a$;}\\
\frac{1}{\eps}(t-a), &\text{if $a \leq t < a + \eps$;}\\
1, &\text{if $a + \eps \leq t \leq b$;}\\
-\frac{1}{\eps}(t-(b+\eps)), &\text{if $b < t \leq b + \eps$;}\\
0, &\text{if $b + \eps < t$.}\\
\end{cases}
\end{align*}
\noindent
Applying the non-negative test function $\varphi_{\eps}(x,t) =  \psi_{\eps}(x,t) \chi_{\eps}(t)$ to (\ref{relative3}) and taking $\eps \to 0$
yields (\ref{entropy2b}). 
\end{proof}

Now consider the Riemann data $\phi$ given by (\ref{riemanndata}). Continuing in the spirit of \cite{Dafermos1} we have the following estimate.
\begin{lemma}\label{outer}
Let $\phi$ be defined by (\ref{riemanndata}) and suppose $U^0 \in L^{\infty}(\rit)$ satisfies $\paral U^0 -\phi \paral_{L^2(\rit)} < +\infty$. Also, assume $\eta: \rit \to \rit$ is smooth and convex, and let $x_L: [t_0, T] \to \rit$ and $x_R: [t_0, T] \to \rit$ be Lipschitz continuous functions. Then for all $a$ and $b$ with $t_0 \leq a < b \leq T$,
\begin{align}\label{entropy2}
&\left\{\displaystyle \int_{-\infty}^{x_L(b)} \eta(U(x,b) \mid C_L) \,dx + \displaystyle \int_{x_R(b)}^{\infty} \eta(U(x,b) \mid C_R) \,dx \right\} \notag\\
&-\left\{ \displaystyle \int_{-\infty}^{x_L(a)} \eta(U(x,a) \mid C_L) \,dx  + \displaystyle \int_{x_R(a)}^{\infty} \eta(U(x,a) \mid C_R) \,dx \right\} \\
&\leq - \displaystyle \int_a^b F(U(x_L(t)-,t),C_L) - \dot{x_L}(t) \eta(U(x_L(t)-,t) \mid C_L) \,dt \notag\\
&+\displaystyle \int_a^b F(U(x_R(t)+,t),C_R) - \dot{x_R}(t) \eta(U(x_R(t)+,t) \mid C_R) \,dt, \notag
\end{align}
where $U$ is the unique entropy solution of (\ref{cl2}) with initial data $U^0$.
\end{lemma}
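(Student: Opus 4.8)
The plan is to follow the proof of Lemma~\ref{innerentropy}, but with the \emph{inner} approximation of an indicator replaced by \emph{outer} approximations of the indicators of the two semi-infinite intervals $(-\infty,x_L(t))$ and $(x_R(t),\infty)$, treated independently of one another. Because the two regions never interact, no lower bound on $x_R(t)-x_L(t)$ is needed, which is why that hypothesis is absent here. Fix $R>0$ large enough that $[-R,R]$ contains the ranges of $x_L$ and $x_R$ on $[t_0,T]$. For the left region, apply (\ref{relative3}) with $C=C_L$ to the non-negative test function $\varphi_\eps(x,t)=\psi_\eps^L(x,t)\,\chi_\eps(t)$, where $\chi_\eps$ is the time cutoff from the proof of Lemma~\ref{innerentropy} and $\psi_\eps^L(\cdot,t)$ is continuous, vanishes outside $[-R-1,x_L(t)]$, equals $1$ on $[-R,x_L(t)-\eps]$, rises linearly on $[-R-1,-R]$, and falls linearly on $[x_L(t)-\eps,x_L(t)]$. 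For the right region, use the mirror-image cutoff $\psi_\eps^R$ supported on $[x_R(t),R+1]$, rising on $[x_R(t),x_R(t)+\eps]$ and falling on $[R,R+1]$, together with $C=C_R$.

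Sending $\eps\to0$ in each of the two inequalities is the routine part, essentially as in Lemma~\ref{innerentropy}: the $\chi_\eps'$ term produces the difference of the relative-entropy integrals at $t=b$ and $t=a$; the moving ramp of $\psi_\eps^L$ produces $\int_a^b[\dot{x_L}(t)\,\eta(U(x_L(t)-,t)\mid C_L)-F(U(x_L(t)-,t),C_L)]\,dt$, with the trace taken from \emph{inside} the region, hence the one-sided limit $U(x_L(t)-,t)$ (and symmetrically $U(x_R(t)+,t)$ on the right); and the stationary ramp near $x=-R$ leaves only a remainder $E_L(R)$ with $|E_L(R)|\le K\int_a^b\!\int_{-R-1}^{-R}(U(x,t)-C_L)^2\,dx\,dt$, with an analogous $E_R(R)$ on the right. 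Adding the two inequalities gives exactly (\ref{entropy2}), up to the two remainders $E_L(R),E_R(R)$ on the right-hand side.

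It remains to let $R\to\infty$, which is the only genuinely technical step. Two ingredients suffice. First, the elementary quadratic bounds $0\le\eta(U\mid C)\le K(U-C)^2$ and $|F(U,C)|\le K(U-C)^2$ for $U\in[-M,M]$, $M=\paral U^0\paral_{L^{\infty}}$, which follow from Taylor expansion about $C$ together with the maximum principle $\paral U(\cdot,t)\paral_{L^{\infty}}\le M$ ($K$ depending only on $M,\eta,A$). Second, the decay of the $L^2$ tail: $\int_{|x|>R}(U(x,t)-\phi(x-\sigma t))^2\,dx\to0$ as $R\to\infty$, uniformly for $t\in[t_0,T]$. Given these, the quadratic bounds make the density integrals in (\ref{entropy2}) finite and continuous in $t$ (via Remark~\ref{remark1} and dominated convergence) and force $E_L(R),E_R(R)\to0$, completing the proof. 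The crux is the tail estimate, and this is where $\paral U^0-\phi\paral_{L^2(\rit)}<+\infty$ is used: since the propagation speed is bounded by $s=\max_{|\xi|\le M}|A'(\xi)|$, the values of $U$ on $\{x<-R\}$ over $[t_0,T]$ are unchanged if $U^0$ is modified outside $(-\infty,-R+sT)$; replacing $U^0$ there by $C_L$ and then approximating by perturbations of $C_L$ of compact support --- for which (\ref{relative3}) with the quadratic entropy integrates over $\rit$ to give $\paral V(\cdot,t)-C_L\paral_{L^2(\rit)}\le\paral V^0-C_L\paral_{L^2(\rit)}$ --- and passing to the limit by $L^1$-contraction of entropy solutions and Fatou's lemma, one obtains $\int_{-\infty}^{-R}(U(x,t)-C_L)^2\,dx\le\int_{-\infty}^{-R+sT}(U^0(y)-\phi(y))^2\,dy$ for all $t\in[t_0,T]$, and the right-hand side tends to $0$ with $R$. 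The region $\{x>R\}$ is handled symmetrically with $C_R$. I expect this localization, not the test-function computation, to be the main obstacle.
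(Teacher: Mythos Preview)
Your approach is correct and follows the same route as the paper (outer test-function approximation of the semi-infinite intervals, then pass to the limit). The one difference worth noting is the width of the far-field ramp. The paper puts its left ramp on $[-2R,-R]$, of width $R$, so that after taking $\eps\to0$ the far-field remainder is
\[
\int_a^b\frac{1}{R}\int_{-2R}^{-R}F(U(x,t),C_L)\,dx\,dt,
\]
and the extra factor $1/R$ means one only needs $\int_{-2R}^{-R}(U(x,t)-C_L)^2\,dx$ to be bounded uniformly in $R$ (not to decay) to kill the remainder; the paper then simply invokes $\|U^0-\phi\|_{L^2}<\infty$ without further comment. Your width-$1$ ramp on $[-R-1,-R]$ forfeits this $1/R$ factor, so you need genuine tail decay of $\int_{-\infty}^{-R}(U(x,t)-C_L)^2\,dx$, which you then prove via finite propagation speed, truncation, the $L^2$ contraction for compactly supported perturbations of a constant, and Fatou. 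That argument is correct and in fact supplies the justification the paper's one-line appeal to $\|U^0-\phi\|_{L^2}<\infty$ leaves implicit (the remainder involves $U$ at time $t$, not at $t=0$, so some propagation step is needed either way). The paper's trick buys brevity; your route buys a self-contained tail estimate that also guarantees the finiteness of the relative-entropy integrals appearing on the left-hand side.
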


\begin{proof}
Choose R sufficiently large so that $-R < x_L(t) - \eps$ and consider instead the test function 
\begin{align*}
\psi_{\eps,R}(x,t) = 
\begin{cases}
0, &\text{if $x<-2R$;}\\
\frac{1}{R}(x+2R), &\text{if $-2R \leq x < -R$;}\\
1, &\text{if $-R \leq x \leq x_L(t) - \eps$;}\\
-\frac{1}{\eps}(x-x_L(t)), &\text{if $x_L(t) - \eps < x \leq x_L(t)$;}\\
0, &\text{if $x_L(t) < x$.}\\
\end{cases}
\end{align*}
\noindent
Testing (\ref{relative3}) with $\varphi_{\eps,R}(x,t) =  \psi_{\eps,R}(x,t) \chi_{\eps}(t)$ and taking $\eps \to 0$ and $R \to \infty$ we get 
\begin{align*}
\displaystyle \int_{-\infty}^{x_L(b)}  \eta(U(x,b) & \mid C_L) \,dx - \displaystyle \int_{-\infty}^{x_L(a)} \eta(U(x,a) \mid C_L) \,dx.\\
&\leq  \displaystyle \int_a^b \left\{\displaystyle \lim_{R \to \infty} \frac{1}{R} \int_{-2R}^{-R} F(U(x,t),C_L) \,dx \right\} \,dt\\
&- \displaystyle \int_a^b F(U(x_L(t)-,t),C_L) - \dot{x_L}(t) \eta(U(x_L(t)-,t) \mid C_L) \,dt.\\
\end{align*}
\noindent
Since $\paral U^0 -\phi \paral_{L^2(\rit)} < +\infty$ and 
\[\vert F(U(x,t),C_L) \vert \leq \frac{L_{\eta}}{2} \left[ \sup_{\vert w \vert \leq \paral U^0 \paral_{L^{\infty}} + \vert C_L \vert} \vert A^{\prime} (w) \vert \right] (U(x,t) - C_L)^2 \]
on account of (\ref{entropyflux}), the first term on the right-hand side above vanishes. The relative entropy on the right is controlled by a similar argument. 
\end{proof}

\noindent
The following corollary is immediate.

\begin{corollary}\label{outer2}
Assume that the hypotheses of Lemma \ref{outer} are satisfied, and assume additionally that
\begin{enumerate}[(i)]
\item $\dot{x_L}(t) \leq f(U(x_L(t)-,t),C_L)$ for almost every $t \in [t_0, T]$, and
\item $\dot{x_R}(t) \geq f(U(x_R(t)+,t),C_R)$ for almost every $t \in [t_0, T]$,
\end{enumerate} 
where $f(U,C) = \frac{F(U,C)}{\eta(U \mid C)}$. Then, for all $a$ and $b$ with $t_0 \leq a < b \leq T$,
\begin{align*}
&\left\{\displaystyle \int_{-\infty}^{x_L(b)} \eta(U(x,b) \mid C_L) \,dx + \displaystyle \int_{x_R(b)}^{\infty} \eta(U(x,b) \mid C_R) \,dx \right\} \notag\\
&-\left\{ \displaystyle \int_{-\infty}^{x_L(a)} \eta(U(x,a) \mid C_L) \,dx  + \displaystyle \int_{x_R(a)}^{\infty} \eta(U(x,a) \mid C_R) \,dx \right\} \leq 0.
\end{align*}
\end{corollary}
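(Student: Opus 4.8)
The plan is to obtain the conclusion directly from inequality (\ref{entropy2}) of Lemma \ref{outer}, simply by rewriting its right-hand side in terms of $f$ and invoking the sign of the relative entropy. Since the hypotheses of Lemma \ref{outer} are in force, (\ref{entropy2}) holds, and it suffices to check that its right-hand side is $\leq 0$ once assumptions (i) and (ii) are imposed.

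For the left boundary term I would use the identity
\[
-F(U(x_L(t)-,t),C_L) + \dot{x_L}(t)\,\eta(U(x_L(t)-,t)\mid C_L)
= -\,\eta(U(x_L(t)-,t)\mid C_L)\bigl(f(U(x_L(t)-,t),C_L) - \dot{x_L}(t)\bigr),
\]
valid wherever $\eta(U(x_L(t)-,t)\mid C_L)\neq 0$. Because $\eta$ is convex we have $\eta(\,\cdot\mid C_L)\geq 0$, and hypothesis (i) makes the second factor nonnegative for a.e.\ $t$, so this term is $\leq 0$. Symmetrically,
\[
F(U(x_R(t)+,t),C_R) - \dot{x_R}(t)\,\eta(U(x_R(t)+,t)\mid C_R)
= \eta(U(x_R(t)+,t)\mid C_R)\bigl(f(U(x_R(t)+,t),C_R) - \dot{x_R}(t)\bigr) \leq 0
\]
using hypothesis (ii). Integrating over $[a,b]$ and substituting into (\ref{entropy2}) then yields exactly the asserted inequality.

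The only point requiring a brief comment is the division defining $f = F/\eta(\,\cdot\mid\,\cdot)$: by convexity the denominator $\eta(U\mid C)$ vanishes precisely when $\eta$ is affine on the segment between $C$ and $U$, and in that case the integral representation (\ref{entropyflux}) forces $F(U,C)=0$ as well. Hence on that zero set the boundary term $F - \dot{x}\,\eta$ already equals $0$, the value assigned to $f$ there is irrelevant, and the two product identities above remain correct with the convention $0\cdot(\text{anything})=0$. There is thus no genuine obstacle here — the corollary is an immediate consequence of Lemma \ref{outer}, all of the analytic work having gone into establishing (\ref{entropy2}).
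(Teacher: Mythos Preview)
Your proof is correct and is exactly the argument the paper has in mind: the paper simply declares the corollary ``immediate'' from Lemma~\ref{outer}, and you have written out precisely that immediate step, factoring each boundary integrand as $\eta(\,\cdot\mid C)\bigl(f(\,\cdot\,,C)-\dot x\bigr)$ and using the sign conditions (i)--(ii) together with $\eta(\,\cdot\mid C)\geq 0$. Your remark on the vanishing set of $\eta(U\mid C)$ is a nice bit of care that the paper does not spell out (later the paper works with strictly convex $\eta$, where the issue disappears except on the diagonal $U=C$).
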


%%%%%%%%%%%%%%%%%%%%%%%%%%%%%%%%%%%%%%%%%%%%%%%%%%%%%%%%%%
\subsection{The Normalized Relative Entropy Flux}\label{nref}
%%%%%%%%%%%%%%%%%%%%%%%%%%%%%%%%%%%%%%%%%%%%%%%%%%%%%%%%%
Consider the function $f: \rit \times \rit \to \rit$, which we call the normalized relative entropy flux, defined by
\begin{align}\label{entropyflow}
f(U,C) = \frac{F(U,C)}{\eta(U \mid C)} = \displaystyle \int_C^U \varphi (w,U,C) A^{\prime}(w) \,dw,
\end{align}
where
\begin{align*}
\varphi (w,U,C) = \displaystyle \frac{\eta^{\prime}(w) - \eta^{\prime}(C)}{\eta(U \mid C)}.
\end{align*}
The estimates obtained in Section \ref{maintheorem} are based on the fact that for strictly convex functions $A$ and $\eta$, the function defined by (\ref{entropyflow}) is Lipschitz and increasing in the variables $U$ and $C$, respectively. In order to prove this, let us first compute the first order partial derivatives of $f$. Assume $U \ne C$. Then,
\begin{align}\label{partialu}
\frac{\partial f}{\partial U} (U,C) &= \varphi (U,U,C) A^{\prime}(U) + \displaystyle \int_C^U \frac{\partial \varphi}{\partial U}(w,U,C) A^{\prime}(w) \,dw \notag \\
& = \varphi(U,U,C) \left[ A^{\prime}(U) - f(U,C)\right]
\end{align}
and
\begin{align}\label{partialc}
\frac{\partial f}{\partial C} (U,C) &= \displaystyle \int_C^U \frac{\partial \varphi}{\partial C}(w,U,C) A^{\prime}(w) \,dw \notag\\
& = \displaystyle \frac{-\eta^{\prime \prime}(C) \cdot (U-C)}{\eta(U \mid C)}\left[ \frac{A(U) - A(C)}{U-C} - f(U,C) \right].
\end{align}

Using these formulas and taking suitable Taylor approximations, one can show that $f$ and its gradient have a continuous extension to the line $U=C$. In particular, since
\begin{align}\label{massone}
\displaystyle \int_C^U \varphi(w,U,C) \,dw = 1,
\end{align}
for all $U \ne C$, we define $f(C,C) = A^{\prime}(C)$ for all $C \in \rit$.

As suggested above, we would like to show that on bounded subsets of $\rit^2$, (\ref{partialu}) and (\ref{partialc}) are bounded by positive constants from above and below, respectively. The latter estimate is delicate and relies on the following lemma.
\begin{lemma}\label{delicate}
Let $g:[0,1] \to \rit$ and $h:[0,1] \to \rit$ be continuously differentiable functions such that $g^{\prime}(s) \geq \eps_g > 0$ and  $h^{\prime}(s) \geq \eps_h > 0$ for all $s \in (0,1)$. Further suppose $\int_0^1 g(s) \,ds = 1$. Then,
\begin{align*}
\int_0^1 g(s) h(s) \,ds - \int_0^1 h(s) \,ds \geq \frac{\eps_g \eps_h}{12}.
\end{align*}
\end{lemma}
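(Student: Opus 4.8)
The plan is to reduce the inequality to a statement about the covariance of the increasing function $g$ against the increasing function $h$, and then to extract the quantitative gain from the strict monotonicity of both. First I would introduce the antiderivative $G(s) = \int_0^s g(r)\,dr$, so that $G(0) = 0$, $G(1) = 1$, and $G$ is strictly increasing with $G'(s) = g(s) \geq \eps_g$. The left-hand side is
\[
\int_0^1 g(s) h(s)\,ds - \int_0^1 h(s)\,ds = \int_0^1 \bigl(g(s) - 1\bigr) h(s)\,ds,
\]
and integrating by parts (writing $g(s) - 1 = \frac{d}{ds}(G(s) - s)$, with $G(s) - s$ vanishing at both endpoints) this becomes $-\int_0^1 (G(s) - s) h'(s)\,ds$. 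Since $h'(s) \geq \eps_h > 0$, the quantity to bound below is $\eps_h$ times $\int_0^1 (s - G(s))\,ds$, up to the observation that $s - G(s)$ need not be signed — but it is at least controlled: because $G$ is convex-free (merely increasing) we cannot assume $s \geq G(s)$ pointwise, so the key is instead to show $\int_0^1 (s - G(s))\,ds \geq \eps_g/12$, which I expect to be the crux.

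To see that, note $s - G(s) = \int_0^s (1 - g(r))\,dr$, so $\int_0^1 (s - G(s))\,ds = \int_0^1 (1-r)(1 - g(r))\,dr$ by Fubini. Now use $\int_0^1 (1 - g(r))\,dr = 0$ (from $\int_0^1 g = 1$), so I may subtract any constant multiple of $(1-g(r))$; subtracting the value $\tfrac12$ gives $\int_0^1 \bigl(\tfrac12 - r\bigr)(g(r) - 1)\,dr$, and since $\int_0^1 (\tfrac12 - r)\,dr = 0$ I can also add back $c(\tfrac12 - r)$ for any $c$. The cleanest route: again integrate by parts, $\int_0^1 (\tfrac12 - r)(g(r)-1)\,dr = \int_0^1 (G(r) - r)\cdot 1\,dr$ after noticing the boundary terms vanish — which just returns us to the same integral, confirming consistency. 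So instead I would argue directly: since $\int_0^1 (g(r) - 1)\,dr = 0$ and $g$ is strictly increasing with slope at least $\eps_g$, the function $g(r) - 1$ is strictly increasing, hence negative on an initial interval $[0, r_0)$ and positive on $(r_0, 1]$ for a unique $r_0 \in (0,1)$; then $(\tfrac12 - r)(g(r) - 1) \geq 0$ would require $r_0 = \tfrac12$, which fails in general, so I must be more careful and instead lower-bound $\int_0^1 (1-r)(1-g(r))\,dr$ using the worst case.

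The rigorous extraction: because $g' \geq \eps_g$, write $g(r) = g(r_0) + \int_{r_0}^r g'(\tau)\,d\tau$ where $g(r_0) = \bar g := \int_0^1 g$... rather, pick $r_0$ with $g(r_0) = 1$ (exists since $g$ is continuous, strictly increasing, with mean $1$). Then $g(r) - 1 = \int_{r_0}^r g'(\tau)\,d\tau$, so $|g(r) - 1| \geq \eps_g |r - r_0|$ with the correct sign, i.e. $g(r) - 1 \geq \eps_g(r - r_0)$ when $r \geq r_0$ and $\leq \eps_g(r - r_0)$ when $r < r_0$; in both cases $(g(r) - 1)(r - r_0) \geq \eps_g (r - r_0)^2$, which is not quite what I need either. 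The honest plan is: $\int_0^1 (1-r)(1 - g(r))\,dr = -\int_0^1 (1-r)(g(r) - 1)\,dr$; subtract $\int_0^1 (1-r_0)(g(r)-1)\,dr = 0$ to get $\int_0^1 (r - r_0)(g(r) - 1)\,dr \geq \eps_g \int_0^1 (r - r_0)^2\,dr = \eps_g\bigl(\tfrac13 - r_0 + r_0^2\bigr) \geq \eps_g \cdot \tfrac{1}{12}$, the last bound coming from minimizing the quadratic $r_0^2 - r_0 + \tfrac13$ at $r_0 = \tfrac12$. Chaining back: LHS $= \eps_h\int_0^1 (1-r)(1-g(r))\,dr \geq \eps_h \cdot \eps_g \cdot \tfrac{1}{12}$, as claimed. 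The main obstacle is precisely the sign bookkeeping above — choosing the right constant to subtract (namely the value $r_0$ where $g$ crosses its mean) so that the integrand becomes a manifestly nonnegative multiple of $(r - r_0)^2$, and then checking the quadratic-in-$r_0$ bound; everything else is routine integration by parts and Fubini.
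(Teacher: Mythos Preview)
Your argument has one genuine gap, but it is easily repaired. After the integration by parts you correctly obtain
\[
\int_0^1 g(s)h(s)\,ds - \int_0^1 h(s)\,ds \;=\; \int_0^1 \bigl(s - G(s)\bigr)\,h'(s)\,ds,
\]
and you want to bound this below by $\eps_h \int_0^1 (s - G(s))\,ds$. That step requires $s - G(s) \geq 0$ pointwise, which you explicitly disclaim (``$G$ is convex-free (merely increasing) we cannot assume $s \geq G(s)$ pointwise''). But in fact $G'' = g' \geq \eps_g > 0$, so $G$ \emph{is} strictly convex; since $G(0)=0$ and $G(1)=1$, convexity gives $G(s) \leq (1-s)G(0) + sG(1) = s$ for all $s\in[0,1]$. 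With that observation your chain closes: your ``chaining back'' line should read $\geq$ rather than $=$, and the inequality is now justified. The rest of your computation (the Fubini step, the subtraction of $(1-r_0)\int(1-g) = 0$ to produce $\int_0^1 (r-r_0)(g(r)-1)\,dr \geq \eps_g \int_0^1 (r-r_0)^2\,dr \geq \eps_g/12$) is correct.

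Compared with the paper: the paper's proof is the same idea carried out more directly and symmetrically. It picks the same point $s_0$ with $g(s_0)=1$, subtracts the constants $g(s_0)$ and $h(s_0)$ simultaneously (using $\int_0^1 (g-1)=0$ to justify subtracting $h(s_0)$), and obtains the pointwise bound
\[
\bigl(g(s)-g(s_0)\bigr)\bigl(h(s)-h(s_0)\bigr) \;\geq\; \eps_g\,\eps_h\,(s-s_0)^2
\]
in one stroke, then integrates. Your route integrates by parts first, which forces you to use the extra fact that $G$ is convex (to keep the sign), and then applies the paper's trick in the special case $h(s)=s$ to handle the remaining integral. Both work; the paper's version avoids the detour and the convexity observation.
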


\begin{proof}
Since $g$ is increasing and $\int_0^1 g(s) \,ds =1$, there exists a unique $s_0 \in (0,1)$ such that $g(s_0) = 1$. Therefore,
\begin{align*}
\int_0^1 g(s) h(s) \,ds - \int_0^1 h(s) \,ds &= \int_0^1 (g(s)-g(s_0))h(s) \,ds\\
&= \int_0^1 (g(s)-g(s_0))(h(s)-h(s_0)) \,ds\\
&= \int_0^1 \left[ \int_{s_0}^s g^{\prime}(\tau) \,d \tau \right] \left[ \int_{s_0}^s h^{\prime}(\tau) \,d \tau \right] \,ds\\
&\geq \eps_g \eps_h \int_0^1 (s-s_0)^2 \,ds \geq  \frac{\eps_g \eps_h}{12},
\end{align*}
where we used the fact that the last term is minimized at $s_0 = \frac{1}{2}$.
\end{proof}

We can now prove our original claim.
\begin{lemma}\label{lipz}
Assume $A:\rit \to \rit$ and $\eta:\rit \to \rit$ are smooth, strictly convex functions, and let $f: \rit \times \rit \to \rit$ be defined by (\ref{entropyflow}). Then, for bounded sets $\Omega \subset \rit \times \rit$, we have
\begin{enumerate}[(i)]
\item $0 \leq \displaystyle \frac{\partial f}{\partial U} \bigg \vert_{\Omega} \leq L_{\Omega}$, and
\item $0 < \eps_{\Omega} \leq \displaystyle \frac{\partial f}{\partial C} \bigg \vert_{\Omega}$,
\end{enumerate}
where $\eps_\Omega$ and $L_\Omega$ are constants depending on $\Omega$.
\end{lemma}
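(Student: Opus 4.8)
The plan is to derive both estimates directly from the formulas (\ref{partialu}), (\ref{partialc}) together with the normalization (\ref{massone}), reducing the delicate part (ii) to an application of Lemma~\ref{delicate}. Fix a bounded $\Omega$, say $\Omega\subseteq[-M,M]^2$, and set $\alpha_\eta=\min_{[-M,M]}\eta''>0$, $M_\eta=\max_{[-M,M]}\eta''$, $\alpha_A=\min_{[-M,M]}A''>0$, $M_A=\max_{[-M,M]}A''$. Every $w$ lying between $C$ and $U$ for some $(U,C)\in\Omega$ belongs to $[-M,M]$, and Taylor's theorem with integral remainder gives $\tfrac{\alpha_\eta}{2}(U-C)^2\le\eta(U\mid C)\le\tfrac{M_\eta}{2}(U-C)^2$; these two-sided bounds are what make the apparent $1/|U-C|$ singularities in (\ref{partialu}) and (\ref{partialc}) harmless.

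For (i), I would first read the sign of $\partial_U f$ off of (\ref{partialu}). The factor $\varphi(U,U,C)=(\eta'(U)-\eta'(C))/\eta(U\mid C)$ has the sign of $U-C$, and by (\ref{massone}) one may write $A'(U)-f(U,C)=\int_C^U\varphi(w,U,C)\,(A'(U)-A'(w))\,dw$, where on the interval of integration $\varphi(\,\cdot\,,U,C)$ has the sign of $U-C$ and, by convexity of $A$, $A'(U)-A'(w)$ has the sign of $U-w$, hence of $U-C$; so the two factors in (\ref{partialu}) share a common sign and $\partial_U f\ge0$. For the upper bound, the estimates $|\varphi(w,U,C)|\le 2M_\eta/(\alpha_\eta|U-C|)$ and $|A'(U)-A'(w)|\le M_A|U-C|$ on that interval give $|A'(U)-f(U,C)|\le (2M_\eta M_A/\alpha_\eta)|U-C|$, and multiplying by $|\varphi(U,U,C)|\le 2M_\eta/(\alpha_\eta|U-C|)$ the factor $|U-C|$ cancels, leaving $\partial_U f\le 4M_\eta^2M_A/\alpha_\eta^2=:L_\Omega$ off the diagonal; it extends to $U=C$ by the continuous extension already noted after (\ref{massone}).

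The heart of the proof is (ii). I would substitute $w=C+s(U-C)$ to put the second factor of (\ref{partialc}) into the form handled by Lemma~\ref{delicate}: with $g(s)=\big(\eta'(C+s(U-C))-\eta'(C)\big)(U-C)/\eta(U\mid C)$ and $h(s)=A'(C+s(U-C))$ one has $f(U,C)=\int_0^1 gh\,ds$, $\tfrac{A(U)-A(C)}{U-C}=\int_0^1 h\,ds$, and $\int_0^1 g\,ds=1$ (this last identity being precisely (\ref{massone})). A direct computation gives $g'(s)=\tfrac{(U-C)^2}{\eta(U\mid C)}\,\eta''(C+s(U-C))\ge 2\alpha_\eta/M_\eta=:\eps_g>0$, while $h'(s)=A''(C+s(U-C))(U-C)$, so that $h'$ (when $U>C$) or $-h'$ (when $U<C$) is bounded below by $\alpha_A|U-C|=:\eps_h$. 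Applying Lemma~\ref{delicate} to the pair $(g,h)$ or $(g,-h)$ as appropriate yields
\[
\Big|\tfrac{A(U)-A(C)}{U-C}-f(U,C)\Big|\ \ge\ \frac{\eps_g\eps_h}{12}\ =\ \frac{\alpha_\eta\alpha_A}{6M_\eta}\,|U-C|.
\]
Since the prefactor in (\ref{partialc}) satisfies $\big|\eta''(C)(U-C)/\eta(U\mid C)\big|\ge 2\alpha_\eta/(M_\eta|U-C|)$, multiplying the two lower bounds cancels the $|U-C|$, and one checks that the prefactor and the bracket carry the same sign in each of the regimes $U>C$ and $U<C$, so the product is in fact positive. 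This gives $\partial_C f\ge \alpha_\eta^2\alpha_A/(3M_\eta^2)=:\eps_\Omega>0$ off the diagonal, and the bound again extends to $U=C$ by continuity.

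The main obstacle is exactly this last lower bound: the prefactor $\eta''(C)(U-C)/\eta(U\mid C)$ blows up like $1/|U-C|$ as $U\to C$, so $\partial_C f$ can remain uniformly positive only because the bracket $\tfrac{A(U)-A(C)}{U-C}-f(U,C)$ vanishes at precisely the linear rate in $|U-C|$ — and quantifying that rate is the whole point of Lemma~\ref{delicate}. A secondary nuisance is sign bookkeeping: $h'$ changes sign across the diagonal, so Lemma~\ref{delicate} must be invoked with $(g,h)$ or $(g,-h)$ according to the sign of $U-C$, and one must verify that the prefactor flips sign in tandem so that $\partial_C f$ stays positive on all of $\Omega$.
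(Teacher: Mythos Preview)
Your proof is correct and follows essentially the same route as the paper's: the same formulas (\ref{partialu})--(\ref{partialc}), the same substitution $w=C+s(U-C)$ in (ii), and the same reduction to Lemma~\ref{delicate}, arriving at the identical constant $\eps_\Omega=\alpha_\eta^2\alpha_A/(3M_\eta^2)$. The only cosmetic differences are that the paper gets a slightly sharper $L_\Omega$ in (i) by bounding $A'(U)-f(U,C)$ directly by $A'(U)-A'(C)$ (rather than integrating the pointwise bound on $\varphi$), and absorbs $\mathrm{sgn}(U-C)$ into the definition of $h$ instead of case-splitting on the sign of $U-C$.
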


\begin{proof}
First, choose $R$ sufficiently large so that $\vert (U, C) \vert \leq R$ for all $(U,C) \in \Omega$, and let $\eps_A$, $\eps_{\eta}$, $L_A$, and $L_{\eta}$ be positive constants such that
\begin{align}\label{abound}
0 < \eps_A \leq A^{\prime \prime} (\xi) \leq L_A
\end{align}
and 
\begin{align}\label{etabound}
0 < \eps_{\eta} \leq {\eta}^{\prime \prime} (\xi) \leq L_{\eta}
\end{align}
for all $\vert \xi \vert \leq R$. 
Taking into account (\ref{massone}) and the fact that $A^{\prime}$ and ${\eta}^{\prime}$ are increasing, the quantity computed in (\ref{partialu}), for $U \ne C$, can be controlled in the following way. 
\begin{align*}
0 \leq \frac{\partial f}{\partial U}(U,C) & \leq \varphi (U,U,C) \left[ A^{\prime}(U) - A^{\prime}(C)\right] \\
& = \frac{\int_C^U {\eta}^{\prime \prime} (\xi) \,d \xi}{\eta(U \mid C)} \left[ \int_C^U A^{\prime \prime} (\xi) \,d \xi \right]\\[0.2 cm]
& \leq \frac{L_{\eta} L_A (U-C)^2}{\eps_{\eta} (U-C)^2}  = \frac{L_{\eta} L_A}{\eps_{\eta}}.
\end{align*}
Since the bound is independent of $ \vert U-C \vert$, the estimate holds also for $U=C$. This proves part (i) of the lemma.

Next, assume $U \ne C$ and observe that, after a change of variables, equation (\ref{partialc}) can be rewritten in the form
\begin{align}\label{partialc2}
\frac{\partial f}{\partial C} (U,C)  &= \displaystyle \frac{-\eta^{\prime \prime}(C) \cdot \vert U-C \vert}{\eta(U \mid C)} \left[\int_0^1 h(s) \,ds - \int_0^1 g(s)h(s) \,ds \right],
\end{align}
where
\begin{align*}
h(s) = sgn(U-C) A^{\prime}(C+s(U-C))
\end{align*}
and
\begin{align*}
g(s) =(U-C) \varphi(C+s(U-C), U, C).
\end{align*}
Let us check that $g$ and $h$ satisfy the hypotheses of Lemma \ref{delicate}. First, according to (\ref{massone}), we have $\int_0^1 g(s) \,ds = 1$. Moreover, we compute
\begin{align*}
g^{\prime}(s) =\frac{(U-C)^2 \eta^{\prime \prime}(C+s(U-C))}{\eta(U \mid C)} \geq \frac{{\eps}_{\eta}(U-C)^2}{\frac{L_{\eta}}{2} (U-C)^2} = \frac{2{\eps}_{\eta}}{L_{\eta}} > 0.
\end{align*}
and
\begin{align*}
h^{\prime}(s) = \vert U-C \vert A^{\prime \prime}(C+s(U-C)) \geq \vert U-C \vert {\eps}_A > 0
\end{align*}
Therefore, Lemma \ref{delicate} applies and we deduce from (\ref{partialc2}) that
\begin{align*}
\frac{\partial f}{\partial C} (U,C)  &\geq \displaystyle \frac{-\eta^{\prime \prime}(C) \cdot \vert U-C \vert}{\eta(U \mid C)} \left[-\frac{ \left(\frac{2{\eps}_{\eta}}{L_{\eta}} \right) \vert U-C \vert {\eps}_A}{12} \right]\\
& \geq \frac{{\eps}_A}{12} \left(\frac{{2\eps}_{\eta}}{L_{\eta}} \right)^2 = \frac{{\eps}_A {\eps_{\eta}}^2}{3{L_{\eta}}^2}  > 0.
\end{align*}
Again, since this bound does not depend on $\vert U-C \vert$, the estimate extends to the line $U=C$. Therefore, we have (ii) and the proof of the lemma is complete. 
\end{proof}

Note that the proof of Lemma \ref{lipz} in the case $\eta(U) = \frac{U^2}{2}$ is much simpler. Indeed, a change of variables in (\ref{entropyflow}) yields
\begin{align}\label{eflux}
f(U,C) &= \displaystyle 2 \int_0^1 sA^{\prime}(C+(U-C)s) \,ds.
\end{align}
Therefore, differentiating (\ref{eflux}) with respect to $U$ and $C$ and using (\ref{abound}) we get
\[0 <  \frac{2}{3} \eps_A  \leq \frac{\partial f}{\partial U} \bigg \vert_\Omega \leq \frac{2}{3} L_A \] 
and
\[0 <  \frac{1}{3} \eps_A  \leq \frac{\partial f}{\partial C} \bigg \vert_\Omega \leq \frac{1}{3} L_A.\]
%%%%%%%%%%%%%%%%%%%%%%%%%%%%%%%%%%%%%%%%%%%%%%%%%%%%%%%%%%
\section{A Relative Entropy Technique for Shocks}\label{maintheorem}
%%%%%%%%%%%%%%%%%%%%%%%%%%%%%%%%%%%%%%%%%%%%%%%%%%%%%%%%%
It is well-known that when $A$ is convex and when the initial data $\phi$, given by (\ref{riemanndata}), is non-increasing, the unique entropy solution of the Riemann problem is the traveling shock wave $\phi(x-\sigma t)$, where the shock speed $\sigma$ is given by the Rankine-Hugoniot relation. The goal of this section is to show that traveling shock waves are stable in the sense of Theorem \ref{strong}. In fact, we will show a slightly more general result.

\begin{theorem}\label{strong2}
Let $U^0 \in L^{\infty}(\rit)$ and assume $U^0 -\phi \in L^2(\rit)$ where $\phi$ is given by (\ref{riemanndata}) with $C_L > C_R$. Further, assume $U$ is the unique entropy solution of (\ref{introcl}), for a smooth flux function $A: \rit \to \rit$ verifying $A^{\prime \prime} > 0$. Then, for any smooth $\eta: \rit \to \rit$ verifying ${\eta}^{\prime \prime} > 0$, there exists a Lipschitz continuous function $\overline{x}: [0,\infty) \to \rit$ and a constant $\lambda(\paral U^0 \paral_{L^\infty};\phi; A; \eta) >0 $ such that 
\begin{align}\label{mainestimate2}
\int_{-\infty}^{\infty} \eta (U(x , t) \mid \phi(x -\sigma t - \overline{x}(t))) \,dx \leq \int_{-\infty}^{\infty} \eta (U^0(x) \mid \phi(x)) \,dx < \infty,
\end{align}
and
\begin{align}
\vert \overline{x}(t) \vert \leq \lambda \paral U^0 -\phi \paral_{L^2(\rit)} \sqrt{t}
\end{align}
for all $t \geq 0$, where $\sigma$ is given by the relation $\sigma (C_L - C_R) = A(C_L)-A(C_R)$.
\end{theorem}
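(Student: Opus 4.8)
The plan is to produce two Lipschitz curves $x_L,x_R:[0,\infty)\to\rit$, both issuing from the origin, along which the one-sided relative entropies $t\mapsto\int_{-\infty}^{x_L(t)}\eta(U(x,t)\mid C_L)\,dx$ and $t\mapsto\int_{x_R(t)}^{\infty}\eta(U(x,t)\mid C_R)\,dx$ are nonincreasing, to show that this forces $x_R(t)\le x_L(t)$, and then to take $\overline{x}(t)=x_L(t)-\sigma t$. For the construction I would invoke the appendix existence result for the differential inclusions
\[\dot x_L(t)\in\big[f(U(x_L(t)+,t),C_L),\,f(U(x_L(t)-,t),C_L)\big],\qquad \dot x_R(t)\in\big[f(U(x_R(t)+,t),C_R),\,f(U(x_R(t)-,t),C_R)\big],\]
with $x_L(0)=x_R(0)=0$, where $f$ is the normalized relative entropy flux; the intervals are correctly ordered thanks to Lemma~\ref{lipz}(i) and the Oleinik trace inequality of Remark~\ref{remark1}. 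Since $U$ obeys a maximum principle, $f$ evaluated along either curve stays in a fixed compact set, so $x_L,x_R$ are globally Lipschitz and every monotonicity statement of Lemma~\ref{lipz} is available. The upper inclusion bounds $\dot x_L\le f(U(x_L-,t),C_L)$ and $\dot x_R\ge f(U(x_R+,t),C_R)$ are exactly hypotheses (i)--(ii) of Corollary~\ref{outer2}, so by Lemma~\ref{outer} (letting the lower time endpoint tend to $0$, using Remark~\ref{remark1}) each of the two one-sided relative entropies is nonincreasing in $t$; at $t=0$ their sum is $\int_{\rit}\eta(U^0\mid\phi)\,dx$, which is finite and bounded by $\Lambda\|U^0-\phi\|_{L^2(\rit)}^2$ for a constant $\Lambda$ depending on $\|U^0\|_{L^\infty}$, $\phi$, $\eta$.

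The heart of the matter, and the step I expect to be the main obstacle, is the inequality $x_R(t)\le x_L(t)$. Suppose instead $x_L(t_1)<x_R(t_1)$ for some $t_1>0$, and set $t_0=\sup\{t\le t_1:x_L(t)\ge x_R(t)\}$, so that $x_L(t_0)=x_R(t_0)$ and $x_L<x_R$ on $(t_0,t_1]$. I would apply Lemma~\ref{innerentropy} on the region $[x_L(t),x_R(t)]$ over $[t_0+\tau,t_1]$ with reference constant $C=C_L$ and then let $\tau\downarrow0$; the initial boundary integral tends to $0$ because the region collapses to a point while its integrand stays bounded. The right-hand side of \eqref{entropy2b} is then a sum of two boundary defect terms $\eta(U(x_L+,t)\mid C_L)\big[f(U(x_L+,t),C_L)-\dot x_L(t)\big]$ and $-\eta(U(x_R-,t)\mid C_L)\big[f(U(x_R-,t),C_L)-\dot x_R(t)\big]$, and both are $\le0$: the first because $\dot x_L\ge f(U(x_L+,t),C_L)$ directly from the inclusion, the second because $\dot x_R\le f(U(x_R-,t),C_R)\le f(U(x_R-,t),C_L)$ by the inclusion together with monotonicity of $f$ in $C$ (Lemma~\ref{lipz}(ii)). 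Hence $\int_{x_L(t_1)}^{x_R(t_1)}\eta(U(x,t_1)\mid C_L)\,dx\le0$, which by strict convexity of $\eta$ forces $U(\cdot,t_1)\equiv C_L$ a.e. on $(x_L(t_1),x_R(t_1))$. Repeating the computation with $C=C_R$ — the defect terms are again $\le0$, now using $\dot x_L\ge f(U(x_L+,t),C_L)\ge f(U(x_L+,t),C_R)$ and $\dot x_R\le f(U(x_R-,t),C_R)$ — gives $U(\cdot,t_1)\equiv C_R$ on the same interval of positive length, contradicting $C_L>C_R$. The delicacy here is precisely that two distinct reference constants must be pushed through Lemma~\ref{innerentropy}, with the sign of each boundary defect term checked separately.

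Granting $x_R(t)\le x_L(t)$, put $\overline{x}(t)=x_L(t)-\sigma t$, which is Lipschitz. Splitting the relative entropy integral at $x_L(t)$,
\[\int_{\rit}\eta\big(U(x,t)\mid\phi(x-\sigma t-\overline{x}(t))\big)\,dx=\int_{-\infty}^{x_L(t)}\eta(U\mid C_L)\,dx+\int_{x_L(t)}^{\infty}\eta(U\mid C_R)\,dx,\]
and since $x_R(t)\le x_L(t)$ and $\eta(\cdot\mid C_R)\ge0$, the last term is $\le\int_{x_R(t)}^{\infty}\eta(U\mid C_R)\,dx$. By the nonincrease established above, the resulting sum is bounded by its value at $t=0$, namely $\int_{-\infty}^{0}\eta(U^0\mid C_L)\,dx+\int_{0}^{\infty}\eta(U^0\mid C_R)\,dx=\int_{\rit}\eta(U^0\mid\phi)\,dx<\infty$, which is precisely \eqref{mainestimate2}.

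Finally, for the shift bound write $\ell(t)=|\overline{x}(t)|=|x_L(t)-\sigma t|$. Since $x_L$ is Lipschitz with $x_L(0)=0$, one has $\ell(t)\le Ct$ with $C$ depending only on $\|U^0\|_{L^\infty}$ and $A$, which controls small $t$. For large $t$, the nonincrease of the two one-sided entropies and the two-sided comparability of $\eta(\cdot\mid C)$ with $(\cdot-C)^2$ on a fixed compact set give $\int_{-\infty}^{x_L(t)}(U(x,t)-C_L)^2\,dx\le K\|U^0-\phi\|_{L^2}^2$ and the analogous bound with $C_R$ and $\int_{x_R(t)}^{\infty}$; in particular $U(\cdot,t)$ is $L^2$-close to the (constant) value of $\phi(\cdot-\sigma t)$ on the interval $I(t)$ between $\sigma t$ and $x_L(t)$. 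At the same time, the finite-speed $L^1$-contraction of entropy solutions applied to $U$ and the shock $\phi(\cdot-\sigma t)$, followed by Cauchy--Schwarz, gives $\int_{I(t)}|U(x,t)-\phi(x-\sigma t)|\,dx\le\sqrt{2(R_0+Mt+\ell(t))}\,\|U^0-\phi\|_{L^2}$ for suitable $R_0,M$. Feeding both estimates into $|C_L-C_R|\,\ell(t)\le\int_{I(t)}|U-C_L|\,dx+\int_{I(t)}|U-C_R|\,dx$, using $\int_{I(t)}|U-C|\,dx\le\sqrt{\ell(t)}\,\|U-C\|_{L^2(I(t))}$, and absorbing the $\sqrt{\ell(t)}$ term by Young's inequality yields $\ell(t)\le C_1\|U^0-\phi\|_{L^2}^2+C_2\|U^0-\phi\|_{L^2}\sqrt{t}$; combining this with $\ell(t)\le Ct$ in the range $t\lesssim\|U^0-\phi\|_{L^2}^2$ gives $\ell(t)\le\lambda\|U^0-\phi\|_{L^2}\sqrt{t}$ for all $t\ge0$ with $\lambda$ depending on $\|U^0\|_{L^\infty}$, $\phi$, $A$, $\eta$, which completes the proof.
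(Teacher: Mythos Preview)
Your proposal is correct, and its core ordering argument is genuinely different from---and arguably cleaner than---the paper's.

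The construction of $x_L,x_R$ via the Filippov inclusions, the use of Corollary~\ref{outer2} for the monotonicity of the outer entropies, and the final conclusion of \eqref{mainestimate2} once $x_R\le x_L$ is known, all match the paper. The divergence is in proving $x_R(t)\le x_L(t)$. The paper establishes this through the quantitative Lemmas~\ref{inner1}--\ref{inner2} and Proposition~\ref{nopass}: it picks a single intermediate constant $C\in(C_R,C_L)$, uses the strict lower bound $\eps_\Omega>0$ on $\partial f/\partial C$ and the upper bound $L_\Omega$ on $\partial f/\partial U$ to bound the measure of the set where $\dot x_R\ge\dot x_L$, and shows $x_R-x_L$ can grow by at most a fixed factor $\kappa$ over any interval where it stays above a level $\delta$. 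Your argument instead applies Lemma~\ref{innerentropy} twice on the hypothetical region $[x_L,x_R]$, once with $C=C_L$ and once with $C=C_R$; in each case the boundary defects are nonpositive thanks only to the \emph{monotonicity} of $f$ in $C$ (not its strict quantitative form), forcing $U\equiv C_L$ and $U\equiv C_R$ simultaneously on a nontrivial interval. This bypasses Lemmas~\ref{inner1}--\ref{inner2} entirely and, in particular, never invokes the upper bound in Lemma~\ref{lipz}(i) nor the strict lower bound in Lemma~\ref{lipz}(ii). What the paper's route buys is quantitative control on how $x_R-x_L$ could grow, but since the goal is only the qualitative ordering, that extra information is unused.

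For the shift bound your argument reaches the right estimate with the same ingredients (Kru\v{z}kov $L^1$-contraction, Cauchy--Schwarz, the two-sided comparability of $\eta(\cdot\mid C)$ with the square), but via a slightly more roundabout path: you incur an extra $\|U^0-\phi\|_{L^2}^2$ term from Young's inequality and then remove it by a small-$t$/large-$t$ split. The paper avoids this by writing $(C_L-C_R)|\overline x(t)|=\|\phi(\cdot-\sigma t-\overline x(t))-\phi(\cdot-\sigma t)\|_{L^1}$ on a ball of radius $(L+|\sigma|)t$ and applying the triangle inequality through $U(\cdot,t)$; since the support length already scales like $t$, the Cauchy--Schwarz factor is $\sqrt{t}$ with no additive constant, and \eqref{mainestimate2} directly controls the remaining $L^2$ norm. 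Both versions yield the same $\lambda\|U^0-\phi\|_{L^2}\sqrt t$ bound. (Your extraneous $R_0$ in the domain-of-dependence radius is not needed: the Lipschitz bound $|x_L(t)|\le Lt$ gives support in $[-(L+|\sigma|)t,(L+|\sigma|)t]$ with no constant offset.)
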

In order to show (\ref{mainestimate2}) for a strictly convex entropy $\eta$, the idea is to construct curves $x_L: [0,\infty) \to \rit$ and $x_R:  [0,\infty) \to \rit$, initialized with the data $x_L(0)=x_R(0)=0$, for which the the total relative entropy
\begin{align}\label{outentropy}
\displaystyle \mathcal{E}(t) = \int_{-\infty}^{x_L(t)} \eta(U(x,t) \mid C_L) \,dx + \displaystyle \int_{x_R(t)}^{\infty} \eta(U(x,t) \mid C_R) \,dx
\end{align}
\noindent
is bounded above by $\mathcal{E}(0)$ for all $t \geq 0$. (Note that when $U^0 -\phi \in L^2(\rit)$ and $U^0 \in L^{\infty}(\rit)$, then $\mathcal{E}(0)$ is finite.) Due to the compressive nature of the solution $U(x,t)$, the construction will produce automatically the constraint $x_R(t) \leq x_L(t)$ for all $t \geq 0$. Therefore, (\ref{mainestimate2}) will follow for any function $\overline{x}(t)$ satisfying $x_R(t)  \leq  \overline{x}(t) + \sigma t \leq x_L(t)$. In particular, this includes the curves $\overline{x}(t) =  x_L(t) - \sigma t$ and $\overline{x}(t) = x_R(t) - \sigma t$.

To control the total entropy, the idea is to exploit (\ref{entropy2}) by choosing $\dot{x_L}(t)$  and $\dot{x_R}(t)$ so that the right hand side vanishes. This makes sense at points of continuity of $U$; however, it turns out we do not have as much freedom at points where $U$ is discontinuous. We borrow the following lemma from \cite{Dafermos1}.

\begin{lemma}\label{rh}
Let $x: [t_0, T] \to \rit$, $0 \leq t_0 < T < \infty$ be a Lipschitz continuous function. Then for almost all $t \in [t_0, T]$,
\begin{align*}\label{rh2}
A(U(x(t)+,t)) - A(U(x(t)-,t)) - \dot{x}(t) \left[ U(x(t)+,t) - U(x(t)-,t) \right] = 0.
\end{align*}
\end{lemma}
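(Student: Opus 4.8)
The plan is to test the weak form of the conservation law against a one‑parameter family of functions concentrating around the curve $x(t)$, very much in the spirit of the proof of Lemma~\ref{innerentropy}, except that we may exploit the \emph{equality} $\partial_t U + \partial_x A(U) = 0$ in $\mathcal D'(\rit\times(0,\infty))$ — which holds because $\eta(U) = \pm U$ are admissible linear entropies, with fluxes $\pm A(U)$, so both one‑sided inequalities are available — and we localize symmetrically on the two sides of $x(t)$ at once. Fix $\chi \in C_c^\infty((t_0,T))$ and, for small $\eps>0$, let $\theta_\eps(s) = \max\{0,\,1-|s|/\eps\}$ be the tent function of half‑width $\eps$ centered at the origin, and set $\varphi_\eps(x,t) = \chi(t)\,\theta_\eps(x - x(t))$. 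Since $x$ is Lipschitz and $U, A(U)\in L^\infty_{loc}$, this (merely Lipschitz, compactly supported) function is an admissible test function, after the usual mollification argument.

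Inserting $\varphi_\eps$ into $\int\!\!\int \big(U\,\partial_t\varphi + A(U)\,\partial_x\varphi\big)\,dx\,dt = 0$ and using $\partial_t\varphi_\eps = \chi'(t)\theta_\eps(x-x(t)) - \dot x(t)\chi(t)\theta_\eps'(x-x(t))$ and $\partial_x\varphi_\eps = \chi(t)\theta_\eps'(x-x(t))$ yields
\begin{align*}
&\int_{t_0}^{T}\!\!\int_\rit \chi'(t)\,\theta_\eps(x-x(t))\,U(x,t)\,dx\,dt\\
&\qquad +\ \int_{t_0}^{T}\!\!\int_\rit \chi(t)\,\theta_\eps'(x-x(t))\,\big(A(U(x,t)) - \dot x(t)\,U(x,t)\big)\,dx\,dt \;=\; 0.
\end{align*}
In the first integral $\theta_\eps(x-x(t)) \to 0$ for $x\neq x(t)$ while $0\le\theta_\eps\le1$ and the integrand is supported in $|x-x(t)|<\eps$, so it tends to $0$ as $\eps\to0$ by dominated convergence. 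In the second, $\theta_\eps'(x-x(t)) = \eps^{-1}$ on $(x(t)-\eps,x(t))$ and $-\eps^{-1}$ on $(x(t),x(t)+\eps)$, so after Fubini its inner ($x$‑) integral equals
\begin{align*}
\chi(t)\left[\frac{1}{\eps}\int_{x(t)-\eps}^{x(t)}\big(A(U) - \dot x(t)U\big)\,dx \;-\; \frac{1}{\eps}\int_{x(t)}^{x(t)+\eps}\big(A(U) - \dot x(t)U\big)\,dx\right].
\end{align*}

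By Remark~\ref{remark1}, $U(\cdot,t)\in BV_{loc}(\rit)$ for every $t>0$, so each one‑sided average above converges, as $\eps\to0$, to the corresponding one‑sided limit, and the displayed quantity tends pointwise in $t$ to $\chi(t)\big[(A(U(x(t)-,t)) - \dot x(t)U(x(t)-,t)) - (A(U(x(t)+,t)) - \dot x(t)U(x(t)+,t))\big]$; moreover it is bounded, uniformly in $\eps$ and $t$, by $C\,\|\chi\|_{L^\infty}$ with $C$ depending only on $\|U\|_{L^\infty}$, $\mathrm{Lip}(x)$ and $\sup_{|w|\le\|U\|_{L^\infty}}|A'(w)|$, so dominated convergence permits passage to the limit under $\int_{t_0}^{T}dt$. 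We obtain
\begin{align*}
\int_{t_0}^{T}\chi(t)\,\Big[A(U(x(t)+,t)) - A(U(x(t)-,t)) - \dot x(t)\big(U(x(t)+,t) - U(x(t)-,t)\big)\Big]\,dt = 0
\end{align*}
for every $\chi\in C_c^\infty((t_0,T))$; since the bracketed expression is a bounded measurable function of $t$, it must vanish for almost every $t\in[t_0,T]$, which is the claimed Rankine--Hugoniot identity along $x(\cdot)$.

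The genuinely delicate points here are all regularity bookkeeping rather than new ideas: the admissibility of the non‑smooth test function $\varphi_\eps$ (mollify in $(x,t)$; the mollifications have uniformly bounded gradients and converge locally uniformly), and the Fubini argument identifying $U(\cdot,t)$ — for the inner integral and for the convergence of one‑sided averages to one‑sided limits — with the canonical $BV_{loc}$ representative furnished by Oleinik's estimate and Vasseur's trace theorem (Remark~\ref{remark1}). I expect this last identification — making sure the traces $U(x(t)\pm,t)$ appearing in the statement really are the limits produced by the computation — to be the main thing to pin down. An alternative route, closer to \cite{Dafermos1}: the discontinuity set of $U$ is contained in a countable union of Lipschitz shock curves $\{y_k\}$, each satisfying Rankine--Hugoniot; for a.e.\ $t$ either $x(t)$ is a continuity point of $U(\cdot,t)$, making the identity trivial, or $x(t) = y_k(t)$ for some $k$, and then $\dot x(t) = \dot y_k(t)$ at a.e.\ such $t$ (two Lipschitz functions that agree on a set have equal derivatives a.e.\ on it), which again yields the identity.
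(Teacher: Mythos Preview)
The paper does not prove this lemma at all: it simply states ``We borrow the following lemma from \cite{Dafermos1}'' and moves on. Your argument is therefore not a comparison target but a genuine addition --- a self-contained proof where the paper offers only a citation.

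Your test-function approach is correct. Testing the distributional \emph{equality} $\partial_t U + \partial_x A(U) = 0$ (available via the linear entropies $\pm U$) against $\chi(t)\theta_\eps(x-x(t))$ and passing to the limit is clean, and the regularity bookkeeping you flag --- admissibility of the Lipschitz test function, measurability of $t\mapsto U(x(t)\pm,t)$, identification of the $BV_{loc}$ representative --- is all standard in this setting and handled by Remark~\ref{remark1}. The alternative route you sketch at the end (the discontinuity set is contained in countably many Lipschitz shock curves, and two Lipschitz functions agreeing on a set have equal derivatives a.e.\ there) is essentially how Dafermos organizes the argument in \cite{Dafermos1}, so that is the approach the paper is implicitly relying on. Your direct computation buys independence from the generalized-characteristics machinery, at the cost of the trace-identification check; the Dafermos route buys a structural picture of where discontinuities can live, at the cost of importing that theory.
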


The lemma simply asserts that if $x(t)$ moves along a discontinuity of $U$ then its derivative must coincide with the shock speed given by the Rankine-Hugoniot condition.

Next, motivated by the idea of generalized characteristics, we consider a curve $x(t)$ solving in the sense of Filippov (see appendix), the differential inclusion

\begin{equation}\label{odeflpv2}
\dot{x}(t) \in [f(U(x(t)+,t),C), f(U(x(t)-,t),C)],
\end{equation}
\noindent
where $f(U,C) = \frac{F(U,C)}{\eta(U \mid C)}$. In view of Lemma \ref{rh}, we find that (\ref{odeflpv2}) is actually quite restrictive.

\begin{proposition}\label{flpvae}
Let $x: [t_0, T] \to \rit$ be a Filippov solution of (\ref{odeflpv2}) on the interval $[t_0, T]$. Then for almost all $t \in [t_0, T]$,
\begin{align*}
\dot{x}(t) = 
\begin{cases}
f(U(x(t)\pm,t),C), &\text{if $U(x(t)-,t) = U(x(t)+,t)$;} \\[0.2 cm] 
\frac{A(U(x(t)+,t)) - A(U(x(t)-,t))}{U(x(t)+,t) - U(x(t)-,t)}, &\text{if $U(x(t)-,t) > U(x(t)+,t)$.} \\
\end{cases}
\end{align*}
\noindent
\end{proposition}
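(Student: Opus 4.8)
The plan is to read off the conclusion from two facts only: the defining inclusion of a Filippov solution, and the Rankine--Hugoniot identity of Lemma \ref{rh}; the point is that these two constraints together leave exactly the two stated possibilities. First I would record the elementary regularity. Since $U \in L^{\infty}(\rit \times (0,\infty))$ and, by Lemma \ref{lipz}, $f$ is continuous (indeed locally Lipschitz), the one-sided traces $U(x(t)\pm,t)$ stay in a fixed bounded set, so the interval on the right-hand side of (\ref{odeflpv2}) is contained in a fixed bounded interval; hence any Filippov solution $x$ of (\ref{odeflpv2}) is Lipschitz on $[t_0,T]$, which is precisely what is needed for Lemma \ref{rh} to apply to it. Moreover, Lemma \ref{lipz}(i) gives $\partial f / \partial U \ge 0$, so together with the Oleinik bound $U(x(t)-,t) \ge U(x(t)+,t)$ from Remark \ref{remark1} the right-hand side of (\ref{odeflpv2}) is a well-defined interval $[f(U(x(t)+,t),C),f(U(x(t)-,t),C)]$ with its endpoints in the written order, which degenerates to a single point exactly when $U(x(t)-,t)=U(x(t)+,t)$.

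Next I would fix a null set $N \subset [t_0,T]$ outside of which, simultaneously: $x$ is differentiable; the Filippov inclusion $\dot x(t) \in [f(U(x(t)+,t),C),f(U(x(t)-,t),C)]$ holds; and the identity of Lemma \ref{rh} holds. Each of these exceptional sets of times is null, so their union $N$ is null. Now fix $t \notin N$; by Remark \ref{remark1} either $U(x(t)-,t) = U(x(t)+,t)$ or $U(x(t)-,t) > U(x(t)+,t)$. In the first case the inclusion's right-hand side is the single point $\{f(U(x(t)\pm,t),C)\}$, which forces $\dot x(t) = f(U(x(t)\pm,t),C)$. In the second case $U(x(t)+,t) - U(x(t)-,t) \ne 0$, so dividing the Rankine--Hugoniot identity of Lemma \ref{rh} through by this quantity yields $\dot x(t) = \bigl(A(U(x(t)+,t)) - A(U(x(t)-,t))\bigr)\big/\bigl(U(x(t)+,t) - U(x(t)-,t)\bigr)$. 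Since these two cases exhaust all $t \notin N$, the asserted piecewise formula holds for almost every $t \in [t_0,T]$.

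I do not expect a serious obstacle here: the analytic content is already packaged in Lemma \ref{rh}, Lemma \ref{lipz}, and Oleinik's one-sided estimate. The only points requiring a little care are the opening observation that a Filippov solution of (\ref{odeflpv2}) is automatically Lipschitz (so that Lemma \ref{rh} is available), and the recognition that at times when $U$ is continuous along the curve the inclusion is already rigid because its value is a singleton — it is only at the jumps that one must invoke Rankine--Hugoniot, and there it pins $\dot x(t)$ down completely.
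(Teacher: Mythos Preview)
Your argument is correct and matches the paper's approach. The paper does not actually supply a proof of Proposition~\ref{flpvae}; it only prefaces the statement with the sentence ``In view of Lemma~\ref{rh}, we find that (\ref{odeflpv2}) is actually quite restrictive,'' and your proposal fills in precisely those details: Lipschitz regularity of $x$ from boundedness of $f\circ U$, Lemma~\ref{rh} at jump points, and collapse of the Filippov interval to a singleton at continuity points via Lemma~\ref{lipz}(i) and Remark~\ref{remark1}.
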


%With these facts in mind, we let $x_L: [0,\infty) \to \rit$ and $x_R: [0,\infty) \to \rit$ verify
%\begin{equation}\label{odexlr}\left\{\begin{array}{l}
%\dot{x_L}(t) \in [f(U(x_L(t)+,t),C_L), f(U(x_L(t)-,t),C_L)],\\[0.2 cm]
%\dot{x_R}(t) \in [f(U(x_R(t)+,t),C_R), f(U(x_R(t)-,t),C_R)],\\[0.2 cm]
%x_L(0) = x_R(0) = 0,
%\end{array}\right.
%\end{equation}
%\noindent
%in the sense of Filippov. 

With these facts in mind, we consider functions $x_L: [0,\infty) \to \rit$ and $x_R: [0,\infty) \to \rit$, with initial values $x_L(0) = x_R(0) = 0$ , solving
\begin{equation}\label{odexlr}\left\{\begin{array}{l}
\dot{x_L}(t) \in [f(U(x_L(t)+,t),C_L), f(U(x_L(t)-,t),C_L)],\\[0.2 cm]
\dot{x_R}(t) \in [f(U(x_R(t)+,t),C_R), f(U(x_R(t)-,t),C_R)],
\end{array}\right.
\end{equation}
\noindent
in the sense of Filippov. Existence (and uniqueness beyond $t=0$) is guaranteed by Proposition \ref{flpv} (see appendix) and Lemma \ref{lipz}. Given (\ref{odexlr}), it follows immediately from Corollary \ref{outer2} that the total entropy $\mathcal{E}(t)$ in (\ref{outentropy}) is bounded above by $\mathcal{E}(0)$. Moreover, since  
\[ \eta(U \mid C) = \int_C^U \int_C^w \eta^{\prime \prime}(\xi) \,d \xi \,dw \]
is non-negative, we easily deduce the following lemma.

\begin{lemma}\label{entropybound}
Assume $x_L: [0, \infty) \to \rit$ and $x_R: [0, \infty) \to \rit$ verify (\ref{odexlr}) in the sense of Filippov. Further, assume $x_R(t) \leq  x_L(t)$ for all $t \geq 0$. Then for any function $x: [0, \infty) \to \rit$ satisfying $x_R(t) \leq  x(t) \leq x_L(t)$ for all $t \geq 0$, we have 
\begin{align*}
\displaystyle \int_{-\infty}^{x(t)} \eta(U(x,t) \mid C_L) \,dx + \displaystyle \int_{x(t)}^{\infty} \eta(U(x,t) \mid C_R) \,dx \leq \mathcal{E}(t) \leq \mathcal{E}(0) ,
\end{align*}
for all $t \geq 0$, where $\mathcal{E}$ is defined by (\ref{outentropy}). 
\end{lemma}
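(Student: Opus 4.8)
The plan is to separate the two inequalities in the statement and handle each with essentially no computation. The rightmost one, $\mathcal{E}(t) \leq \mathcal{E}(0)$, is in effect already secured: since $x_L$ and $x_R$ are Lipschitz and solve the Filippov inclusions (\ref{odexlr}), reading off the endpoints of those inclusions gives $\dot{x_L}(t) \leq f(U(x_L(t)-,t),C_L)$ and $\dot{x_R}(t) \geq f(U(x_R(t)+,t),C_R)$ for a.e.\ $t$, which are exactly hypotheses (i)--(ii) of Corollary \ref{outer2}. Applying that corollary with $t_0 = a = 0$ and $b = t$ (the finiteness $\mathcal{E}(0) < \infty$ coming from $U^0 - \phi \in L^2(\rit)$ and $U^0 \in L^{\infty}(\rit)$) yields $\mathcal{E}(t) - \mathcal{E}(0) \leq 0$. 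So the only genuinely new point is the leftmost inequality.

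For that, I would fix $t \geq 0$ and simply use additivity of the integral together with the hypothesis $x_R(t) \leq x(t) \leq x_L(t)$: writing $\int_{-\infty}^{x(t)} = \int_{-\infty}^{x_L(t)} - \int_{x(t)}^{x_L(t)}$ and $\int_{x(t)}^{\infty} = \int_{x_R(t)}^{\infty} - \int_{x_R(t)}^{x(t)}$, one obtains
\begin{align*}
\int_{-\infty}^{x(t)} \eta(U(x,t)\mid C_L)\,dx &+ \int_{x(t)}^{\infty} \eta(U(x,t)\mid C_R)\,dx\\
&= \mathcal{E}(t) - \int_{x(t)}^{x_L(t)} \eta(U(x,t)\mid C_L)\,dx - \int_{x_R(t)}^{x(t)} \eta(U(x,t)\mid C_R)\,dx .
\end{align*}
Since $\eta$ is convex, $\eta(U\mid C) = \int_C^U \int_C^w \eta^{\prime\prime}(\xi)\,d\xi\,dw \geq 0$ pointwise, so the two subtracted integrals are non-negative and can be dropped, giving the bound $\leq \mathcal{E}(t)$. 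Chaining this with the first step finishes the proof.

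I do not anticipate a real obstacle here; the content is essentially the non-negativity of the relative entropy plus splitting the line at the intermediate point $x(t)$. The only thing worth double-checking is that the hypotheses of Corollary \ref{outer2} genuinely apply to the Filippov solutions of (\ref{odexlr}) --- namely their Lipschitz regularity (from the construction via Proposition \ref{flpv} and Lemma \ref{lipz}) and the translation of the differential inclusion into the one-sided derivative bounds --- but this was already observed in the discussion preceding the lemma.
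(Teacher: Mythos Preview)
Your proposal is correct and follows essentially the same approach as the paper: the paper also obtains $\mathcal{E}(t)\le\mathcal{E}(0)$ as an immediate consequence of Corollary \ref{outer2} applied to the Filippov solutions of (\ref{odexlr}), and then invokes the non-negativity of $\eta(U\mid C)=\int_C^U\int_C^w\eta''(\xi)\,d\xi\,dw$ to drop the two intermediate integrals and obtain the leftmost inequality. Your decomposition via $\int_{-\infty}^{x(t)}=\int_{-\infty}^{x_L(t)}-\int_{x(t)}^{x_L(t)}$ (and similarly on the right) is exactly the splitting the paper has in mind when it says the lemma is ``easily deduced.''
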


Given Lemma \ref{entropybound}, it remains to show that when (\ref{odexlr}) holds and when $C_L > C_R$, then $x_R(t) \leq x_L(t)$ for all $t \geq 0$. Since $x_L$ and $x_R$ coincide at $t=0$ and they move continuously, the idea is to show that $x_R$ cannot pass $x_L$; that is, we would like to argue that when $x_R(t) =  x_L(t)$, then in some sense $\dot{x_R}(t) \leq \dot{x_L}(t)$. This is not precise, of course, since the derivatives may be only measurable; however, note that when $x_R(t) =  x_L(t) = \tilde{x}$ and $(\tilde{x},t)$ is a point of continuity of $U$, then, formally, the monotonicity of $f$ (Lemma \ref{lipz}) implies
\[\dot{x_L}(t) =  f(U(\tilde{x},t), C_L) >  f(U(\tilde{x} ,t), C_R) = \dot{x_R}(t).\]
This suggests that $x_R$ may only pass $x_L$ at a discontinuity of $U$; however, as we will show, Lemma \ref{lipz} does not allow it. We will argue by contradiction, but first let us prove the following lemmas.

\begin{lemma}\label{inner1}
Assume $x_L: [t_0, T] \to \rit$ and $x_R: [t_0, T] \to \rit$ verify (\ref{odexlr}) in the sense of Filippov on the interval $[t_0, T]$, with $C_L > C_R$. Further, assume that $x_R(t) - x_L(t) \geq \delta >0$ for all $t \in [t_0, T]$. Then for $C \in (C_R, C_L)$ there exists $\lambda >0$ independent of $\delta$, such that for all $a$ and $b$ with $t_0 \leq a < b \leq T$,
\begin{align}\label{entropy2c}
\displaystyle \int_{x_L(b)}^{x_R(b)} \eta(U(x,b) \mid C) \,dx &- \displaystyle \int_{x_L(a)}^{x_R(a)} \eta(U(x,a) \mid C) \,dx \leq -\vert S_{a,b} \vert \lambda \leq 0.
\end{align}
\noindent
where $S_{a,b} = \left\{s \in [a,b]  \mid  \dot{x_R}(s) - \dot{x_L}(s) \geq 0 \right\}$.
\end{lemma}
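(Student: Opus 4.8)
The plan is to apply Lemma \ref{innerentropy} on the moving strip $[x_L(t),x_R(t)]$ with the single constant $C$ and then to estimate the boundary integrand on its right-hand side from above. Since $x_R(t)-x_L(t)\geq\delta$, that lemma applies; writing $F=f\cdot\eta(\,\cdot\mid C)$ as in (\ref{entropyflow}) and abbreviating $U_L^+=U(x_L(t)+,t)$, $U_R^-=U(x_R(t)-,t)$, its right-hand side becomes $\int_a^b\Phi(t)\,dt$ with
\[
\Phi(t)=\eta(U_L^+\mid C)\bigl[f(U_L^+,C)-\dot{x_L}(t)\bigr]-\eta(U_R^-\mid C)\bigl[f(U_R^-,C)-\dot{x_R}(t)\bigr].
\]
So the lemma reduces to showing $\Phi(t)\leq 0$ for a.e.\ $t\in[t_0,T]$ and $\Phi(t)\leq-\lambda$ for a.e.\ $t\in S_{a,b}$, with $\lambda>0$ depending only on $\paral U^0\paral_{L^\infty}$, $A$, $\eta$, $C_L$, $C_R$ and $C$.

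To set up the constants, I would fix a box $\Omega=[-M,M]^2$ with $M\geq\paral U^0\paral_{L^\infty}+|C_L|+|C_R|$, take $\eps_\Omega,L_\Omega$ from Lemma \ref{lipz}, and take $\eps_\eta>0$ to be a lower bound of $\eta''$ on $[-M,M]$, so that $\eta(u\mid C)\geq\tfrac{\eps_\eta}{2}(u-C)^2$ for the relevant $u$. The decisive observation is that, since $x_L$ and $x_R$ are Filippov solutions of (\ref{odexlr}), for a.e.\ $t$ one has $\dot{x_L}(t)\geq f(U_L^+,C_L)$ (the lower endpoint of the inclusion interval) and $\dot{x_R}(t)\leq f(U_R^-,C_R)$ (the upper endpoint); this handles continuity and shock points simultaneously, via the definition of a Filippov solution alone and not through Proposition \ref{flpvae}. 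Because $C_R<C<C_L$ and $\partial_C f\geq\eps_\Omega$ on $\Omega$ by Lemma \ref{lipz}(ii), integrating in $C$ gives $f(U_L^+,C_L)\geq f(U_L^+,C)+\eps_\Omega(C_L-C)$ and $f(U_R^-,C_R)\leq f(U_R^-,C)-\eps_\Omega(C-C_R)$; hence $f(U_L^+,C)-\dot{x_L}(t)\leq-\eps_\Omega(C_L-C)$ and $f(U_R^-,C)-\dot{x_R}(t)\geq\eps_\Omega(C-C_R)$. Setting $c_0:=\eps_\Omega\min(C_L-C,\,C-C_R)>0$, this yields
\[
\Phi(t)\leq-\eps_\Omega(C_L-C)\,\eta(U_L^+\mid C)-\eps_\Omega(C-C_R)\,\eta(U_R^-\mid C)\leq-c_0\bigl[\eta(U_L^+\mid C)+\eta(U_R^-\mid C)\bigr]\leq 0,
\]
which already gives the ``$\leq 0$'' half of (\ref{entropy2c}).

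The hard part will be the uniform-in-$\delta$ strict decay on $S_{a,b}$: I must rule out that both relative entropies $\eta(U_L^+\mid C)$ and $\eta(U_R^-\mid C)$ are simultaneously tiny there. The key is that $\dot{x_R}(t)\geq\dot{x_L}(t)$ on $S_{a,b}$, so chaining the two one-sided bounds from the previous step,
\[
f(U_R^-,C)-\eps_\Omega(C-C_R)\;\geq\;\dot{x_R}(t)\;\geq\;\dot{x_L}(t)\;\geq\;f(U_L^+,C)+\eps_\Omega(C_L-C),
\]
which forces $f(U_R^-,C)-f(U_L^+,C)\geq\eps_\Omega(C_L-C_R)>0$. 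Since $f(\,\cdot\,,C)$ is $L_\Omega$-Lipschitz in its first argument by Lemma \ref{lipz}(i), the traces must then be separated: $|U_R^--U_L^+|\geq d_0:=\eps_\Omega(C_L-C_R)/L_\Omega>0$. Consequently at least one of $|U_L^+-C|$, $|U_R^--C|$ is $\geq d_0/2$, so $\eta(U_L^+\mid C)+\eta(U_R^-\mid C)\geq\tfrac{\eps_\eta}{2}(d_0/2)^2$, and therefore $\Phi(t)\leq-c_0\eps_\eta d_0^2/8=:-\lambda$ for a.e.\ $t\in S_{a,b}$, with $\lambda$ independent of $\delta$, $a$, $b$ and $[t_0,T]$. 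Since $\Phi\leq 0$ off $S_{a,b}$, integrating gives $\int_a^b\Phi(t)\,dt\leq\int_{S_{a,b}}\Phi(t)\,dt\leq-|S_{a,b}|\lambda$, and combining with Lemma \ref{innerentropy} yields (\ref{entropy2c}).
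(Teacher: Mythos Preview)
Your argument is correct and follows essentially the same route as the paper's proof: apply Lemma~\ref{innerentropy}, use the Filippov inclusions $\dot{x_L}\geq f(U_L^+,C_L)$ and $\dot{x_R}\leq f(U_R^-,C_R)$ together with the monotonicity of $f$ in $C$ from Lemma~\ref{lipz}(ii) to get $\Phi\leq 0$, and on $S_{a,b}$ chain through $\dot{x_R}\geq\dot{x_L}$ and the Lipschitz bound from Lemma~\ref{lipz}(i) to force the trace separation and hence a uniform lower bound on $\eta(U_L^+\mid C)+\eta(U_R^-\mid C)$. The only cosmetic difference is that you compare $f(U_R^-,C)$ with $f(U_L^+,C)$ directly, whereas the paper first compares $f(U_R^-,C_L)$ with $f(U_L^+,C_L)$; the resulting constant $\lambda$ is in fact the same.
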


\begin{proof}
Using Lemma \ref{innerentropy}, for all $a$ and $b$ with $t_0 \leq a < b \leq T$,
\begin{align}\label{entropy2e}
\displaystyle & \int_{x_L(b)}^{x_R(b)} \eta(U(x,b) \mid C) \,dx - \displaystyle \int_{x_L(a)}^{x_R(a)} \eta(U(x,a) \mid C) \,dx \notag\\
&\leq \displaystyle \int_a^b  \eta(U(x_L(t)+,t) \mid C) \left[f(U(x_L(t)+,t),C) - \dot{x_L}(t) \right] \,dt \notag\\
&\qquad - \displaystyle \int_a^b \eta(U(x_R(t)-,t) \mid C) \left[ f(U(x_R(t)-,t),C) - \dot{x_R}(t) \right]  \,dt \\
&\leq \displaystyle \int_a^b  \eta(U(x_L(t)+,t) \mid C) \left[f(U(x_L(t)+,t),C) - f(U(x_L(t)+,t),C_L) \right] \,dt \notag\\
&\qquad- \displaystyle \int_a^b \eta(U(x_R(t)-,t) \vert C) \left[ f(U(x_R(t)-,t),C) - f(U(x_R(t)-,t),C_R) \right]  \,dt, \notag
\end{align}
\noindent
where we used (\ref{odexlr}) to get the last inequality. Now, for $t \in S_{a,b}$
\begin{align*}
f(U(x_L(t)+,t),C_L) \leq \dot{x_L}(t) \leq \dot{x_R}(t) \leq f(U(x_R(t)-,t),C_R).
\end{align*}
\noindent
Therefore, since $U \in L^{\infty}$, we deduce using Lemma \ref{lipz} that 
\begin{align}\label{entropy2f}
f(U(x_R(t)-,t),C_L) &- f(U(x_L(t)+,t),C_L) \notag \\[0.3 cm]
&> f(U(x_R(t)-,t),C_L) - f(U(x_R(t)-,t),C_R) \notag\\[0.1 cm]
& = \ds \int_{C_R}^{C_L} \frac{\partial f}{\partial C}(U(x_R(t)-,t), z) \,dz \notag \\[0.2 cm]
& > \eps (C_L-C_R) > 0,
\end{align}
\noindent
for $t \in S_{a,b}$, where $\eps > 0$ is a lower bound on $\frac{\partial f}{\partial C}$. We deduce further, using (\ref{entropy2f}) and Lemma \ref{lipz}, that
\begin{align*}
U(x_R(t)-,t) - U(x_L(t)+,t) > \frac{\eps}{L} (C_L-C_R) >0,
\end{align*}
\noindent
for $t \in S_{a,b}$, where $L > 0$ is an upper bound on $\frac{\partial f}{\partial U}$. Thus, for $t \in S_{a,b}$, either
\begin{enumerate}[(i)]
\item $\eta(U(x_R(t)-,t) \mid C) \geq \kappa (U(x_R(t)-,t)-C)^2 \geq \kappa \left[\frac{\eps}{2 L} (C_L-C_R)\right]^2$, or
\item $\eta(U(x_L(t)+,t) \mid C) \geq \kappa (U(x_L(t)+,t)-C)^2 \geq \kappa \left[\frac{\eps}{2 L} (C_L-C_R)\right]^2$
\end{enumerate}
\noindent
where $\kappa > 0$ is a lower bound for $\frac{1}{2} \eta^{\prime \prime}(\cdot)$. Returning to (\ref{entropy2e}), we get 
\begin{align}\label{entropy2g}
\displaystyle & \int_{x_L(b)}^{x_R(b)} \eta(U(x,b) \mid C) \,dx - \displaystyle \int_{x_L(a)}^{x_R(a)} \eta(U(x,a) \mid C) \,dx \notag\\
&\leq \displaystyle \int_a^b  \eta(U(x_L(t)+,t) \mid C) \left[ -\ds \int_C^{C_L} \frac{\partial f}{\partial C}(U(x_L(t)+,t), z) \,dz \right] \,dt \notag\\
&\qquad - \displaystyle \int_a^b \eta(U(x_R(t)-,t) \mid C) \left[  \ds \int_{C_R}^{C} \frac{\partial f}{\partial C}(U(x_R(t)-,t), z) \,dz \right]  \,dt \notag\\
&\leq - \eps K \displaystyle \int_a^b \left[ \eta(U(x_L(t)+,t) \mid C) + \eta(U(x_R(t)-,t) \mid C) \right] \,dt \notag \\
&\leq - \eps K  \kappa \left[\frac{\eps}{2 L} (C_L-C_R)\right]^2 \vert S_{a,b} \vert,
\end{align}
\noindent
where $K = \text{min} \left\{C_L-C, C-C_R\right\}$ (and we used the fact that $\eta(U \mid C)$ is non-negative). This proves (\ref{entropy2c}) with $\lambda =  \eps K  \kappa \left[\frac{\eps}{2 L} (C_L-C_R)\right]^2$.
%Therefore, we get (\ref{entropy2c}) from (\ref{entropy2g}) where $\lambda =  \eps K  \kappa \left[\frac{\eps}{2 L} (C_L-C_R)\right]^2$. 
\end{proof}

\begin{lemma}\label{inner2}
There exists $1 < \kappa < \infty$ such that for any $\delta > 0$, if $x_L: [t_0, T] \to \rit$ and $x_R: [t_0, T] \to \rit$ verify
\begin{equation*}\left\{\begin{array}{l}
\dot{x_L}(t) \in [f(U(x_L(t)+,t),C_L), f(U(x_L(t)-,t),C_L)],\\[0.2 cm]
\dot{x_R}(t) \in [f(U(x_R(t)+,t),C_R), f(U(x_R(t)-,t),C_R)],\\[0.2 cm]
x_R(t_0) - x_L(t_0) = \delta,
\end{array}\right.
\end{equation*}
\noindent
in the sense of Filippov, with $C_L > C_R$, and if $x_R(t) - x_L(t) \geq \delta$ for all $t \in [t_0, T]$, then $x_R(t) - x_L(t) \leq \kappa \delta$ for all $t \in [t_0, T]$.
\end{lemma}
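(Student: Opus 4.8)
The idea is to combine the dissipation estimate of Lemma~\ref{inner1} with the elementary a priori bound on the speeds $\dot{x_L}, \dot{x_R}$ coming from $U \in L^{\infty}$: the point is that the set of times at which the strip $[x_L(t), x_R(t)]$ is \emph{widening} turns out to have measure $O(\delta)$.

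First I would fix $C = \tfrac{1}{2}(C_L + C_R) \in (C_R, C_L)$ and let $\lambda > 0$ be the corresponding constant furnished by Lemma~\ref{inner1}; it depends only on $\paral U^0 \paral_{L^{\infty}}$, $\phi$, $A$ and $\eta$, and in particular not on $\delta$, $t_0$ or $T$. Fix also $\beta = \beta(\paral U^0 \paral_{L^{\infty}};\phi;\eta) < \infty$, an upper bound for $\eta(w \mid C)$ over all $|w| \leq \paral U^0 \paral_{L^{\infty}}$, and $V = V(\paral U^0 \paral_{L^{\infty}};\phi;A;\eta) < \infty$, an upper bound for $|f(w, C_L)|$ and $|f(w, C_R)|$ over all $|w| \leq \paral U^0 \paral_{L^{\infty}}$ (finite since $f$ is continuous, by Lemma~\ref{lipz}). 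Recall $\paral U \paral_{L^{\infty}} \leq \paral U^0 \paral_{L^{\infty}}$, so these bounds apply to all the relevant traces of $U$. Since the hypotheses of Lemma~\ref{inner1} hold on $[t_0, T]$ with the given $\delta$, applying (\ref{entropy2c}) on $[t_0, t]$ for an arbitrary $t \in (t_0, T]$ and discarding the non-negative term $\int_{x_L(t)}^{x_R(t)} \eta(U(x,t) \mid C)\, dx$ on the left yields
\[
\lambda\,\bigl|S_{t_0, t}\bigr| \;\leq\; \int_{x_L(t_0)}^{x_R(t_0)} \eta(U(x, t_0) \mid C)\, dx \;\leq\; \beta\,\bigl(x_R(t_0) - x_L(t_0)\bigr) \;=\; \beta\delta ,
\]
hence $\bigl|S_{t_0, t}\bigr| \leq \beta\delta/\lambda$ for every $t \in (t_0, T]$.

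It then remains to integrate the difference of the speeds. By the choice of $V$ we have $|\dot{x_L}(s)|, |\dot{x_R}(s)| \leq V$, so $\dot{x_R}(s) - \dot{x_L}(s) \leq 2V$ for a.e.\ $s$, while by definition of $S_{t_0, t}$ one has $\dot{x_R}(s) - \dot{x_L}(s) < 0$ for a.e.\ $s \in [t_0, t] \setminus S_{t_0, t}$. Since $x_R - x_L$ is Lipschitz, for every $t \in [t_0, T]$,
\begin{align*}
x_R(t) - x_L(t) &= \delta + \int_{t_0}^{t} \bigl(\dot{x_R}(s) - \dot{x_L}(s)\bigr)\, ds \\
&\leq \delta + \int_{S_{t_0, t}} \bigl(\dot{x_R}(s) - \dot{x_L}(s)\bigr)\, ds \;\leq\; \delta + 2V\,\bigl|S_{t_0, t}\bigr| \;\leq\; \Bigl(1 + \tfrac{2V\beta}{\lambda}\Bigr)\delta .
\end{align*}
Thus the lemma holds with $\kappa := 1 + 2V\beta/\lambda$, which is $> 1$ and depends only on $\paral U^0 \paral_{L^{\infty}}$, $\phi$, $A$ and $\eta$.

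There is no essential obstacle once Lemma~\ref{inner1} is in hand; the only thing requiring care is the bookkeeping, namely making sure that $\lambda$, $\beta$ and $V$ are all genuinely independent of $\delta$, $t_0$ and $T$ (which they are, being controlled purely in terms of $\paral U^0 \paral_{L^{\infty}}$, $\phi$, $A$ and $\eta$), so that $\kappa$ is a universal constant. Conceptually, the strip $[x_L, x_R]$ can widen only on a time set of measure $O(\delta)$ because every unit of such time dissipates at least $\lambda$ of the relative entropy $\int_{x_L}^{x_R} \eta(U \mid C)$, of which at most $\beta\delta$ is present at time $t_0$, when the strip has width $\delta$.
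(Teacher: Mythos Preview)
Your argument is correct and is essentially identical to the paper's: both bound $|S_{t_0,t}|$ by $O(\delta)$ via Lemma~\ref{inner1} (discarding the non-negative integral at time $t$ and bounding the initial one by a constant times $\delta$), then write $x_R(t)-x_L(t)=\delta+\int_{t_0}^{t}(\dot{x_R}-\dot{x_L})\,ds$, drop the negative part of the integrand off $S_{t_0,t}$, and use the $L^\infty$ bound on the speeds over $S_{t_0,t}$ to conclude. Your $\beta$ and $2V$ play exactly the roles of the paper's $M_2$ and $M_1$, and your explicit choice $C=\tfrac12(C_L+C_R)$ is just a convenient specialization of the paper's unspecified $C\in(C_R,C_L)$.
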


\begin{proof}
Given $t \in (t_0, T]$, let $S_{t_0,t} = \left\{s \in [t_0,t] \mid \dot{x_R}(s) - \dot{x_L}(s) \geq 0 \right\}$. Then,
\begin{align}\label{ftoc}
x_R(t) - x_L(t) &= \delta + \displaystyle \int_{t_0}^t \left [\dot{x_R}(s) - \dot{x_L}(s) \right] \,ds \notag\\
& \leq \delta + \displaystyle \int_{S_{t_0,t}} \left [\dot{x_R}(s) - \dot{x_L}(s) \right] \,ds.
\end{align}
\noindent
Since $U \in L^{\infty}$, Lemma \ref{lipz} implies $\vert \dot{x_R}(s) - \dot{x_L}(s) \vert$ is bounded by some constant $M_1$. Furthermore, on account of Lemma \ref{inner1}, for $C \in (C_R,C_L)$ there exists $\lambda >0$ such that
\begin{align*}
0 & \leq \displaystyle \int_{x_L(t)}^{x_R(t)} \eta(U(x,t) \mid C) \,dx \leq \displaystyle \int_{x_L(t_0)}^{x_R(t_0)} \eta(U(x,t_0) \mid C) \,dx -\vert S_{t_0,t} \vert \lambda \notag \\[0.2 cm]
& \leq \displaystyle \int_{x_L(t_0)}^{x_R(t_0)} M_2 \,dx -\vert S_{t_0,t} \vert \lambda =  M_2 \delta -\vert S_{t_0,t} \vert \lambda,
\end{align*}
where again we used $U \in L^\infty$ to bound $\eta(U \mid C)$ by a constant $M_2$. Therefore, $\left\vert S_{t_0,t} \right\vert \leq \delta \left( \frac{M_2}{\lambda} \right)$, and we deduce using (\ref{ftoc}) that $x_R(t) - x_L(t) \leq \kappa \delta$, where $\kappa = 1 + \frac{M_1 M_2}{\lambda}$.
\end{proof}

\noindent
We can now prove our previous claim.
\begin{proposition}\label{nopass}
Assume $x_L: [0, \infty) \to \rit$ and $x_R: [0, \infty) \to \rit$ verify (\ref{odexlr})
in the sense of Filippov with $C_L > C_R$. Then $x_R(t) \leq x_L(t)$ for all $t \geq 0$.
\end{proposition}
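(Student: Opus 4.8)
The plan is to argue by contradiction, using Lemma \ref{inner2} together with a limiting argument in the size of the gap between $x_L$ and $x_R$. Suppose the conclusion fails, so that $d(t_1) := x_R(t_1) - x_L(t_1) > 0$ for some $t_1 > 0$. Since $x_L$ and $x_R$ are Lipschitz continuous, $d$ is continuous, and $d(0) = x_R(0) - x_L(0) = 0$.

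Fix $\delta$ with $0 < \delta < d(t_1)$. By the intermediate value theorem applied to $d$ on $[0, t_1]$ (using $d(0) = 0 < \delta < d(t_1)$), the set $\{ t \in [0, t_1] : d(t) = \delta \}$ is nonempty; let $s_\delta$ denote its supremum. Then $d(s_\delta) = \delta$ by continuity, $s_\delta < t_1$, and $d(t) \geq \delta$ for all $t \in [s_\delta, t_1]$ — otherwise, at a point of $(s_\delta, t_1)$ where $d < \delta$ we could apply the intermediate value theorem between that point and $t_1$ (where $d(t_1) > \delta$) to produce an element of $\{ d = \delta \}$ strictly larger than $s_\delta$, a contradiction.

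I would then apply Lemma \ref{inner2} on the interval $[s_\delta, t_1]$: the restrictions of $x_L$ and $x_R$ to $[s_\delta, t_1]$ satisfy the differential inclusions of (\ref{odexlr}), with initial gap $x_R(s_\delta) - x_L(s_\delta) = \delta$ and $x_R(t) - x_L(t) \geq \delta$ throughout, so Lemma \ref{inner2} yields $x_R(t) - x_L(t) \leq \kappa \delta$ for all $t \in [s_\delta, t_1]$, where $\kappa$ is the constant furnished by that lemma, crucially independent of $\delta$. In particular $d(t_1) \leq \kappa \delta$. Since this holds for every $\delta \in (0, d(t_1))$, letting $\delta \to 0^+$ forces $d(t_1) \leq 0$, contradicting $d(t_1) > 0$. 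Hence $x_R(t) \leq x_L(t)$ for all $t \geq 0$.

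The substance of the estimate is already carried by Lemmas \ref{inner1} and \ref{inner2}; the only point requiring care here is that one cannot invoke Lemma \ref{inner2} starting directly from a time at which $x_L$ and $x_R$ coincide, since that lemma requires a strictly positive initial separation. The device of starting from the \emph{last} time the gap equals a small value $\delta$, combined with the fact that the dilation constant $\kappa$ does not depend on $\delta$, is exactly what permits sending $\delta \to 0$ while keeping the estimate, thereby producing the contradiction.
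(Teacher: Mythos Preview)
Your argument is correct and is essentially identical to the paper's own proof: both proceed by contradiction, take the supremum $s_\delta$ (called $t_\delta$ in the paper) of the times in $[0,t_1]$ at which the gap equals a small $\delta$, apply Lemma \ref{inner2} on $[s_\delta,t_1]$ to obtain $d(t_1)\le\kappa\delta$ with $\kappa$ independent of $\delta$, and then send $\delta\to 0$. Your exposition is slightly more detailed in justifying that $d(t)\ge\delta$ on $[s_\delta,t_1]$, but the substance is the same.
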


\begin{proof}
We argue by contradiction. Let $d(t) = x_R(t) - x_L(t)$ and suppose $d(T) > 0$ for some $T>0$. Then for $ 0 < \delta < d(T)$ define 
\[ d^{-1}(\delta) =  \left\{0 \leq t \leq T  \mid d(t) = \delta \right\} \]
and let $t_\delta = \displaystyle \sup_{t \in d^{-1}(\delta)} t $. Since $d$ is continuous and $d(0) = 0$, $d^{-1}(\delta)$ is nonempty and $t_\delta < T$. Also, we must have $d(t) \geq \delta$ for $t \in \left[ t_\delta, T \right]$, otherwise $t_\delta$ would be larger. Therefore Lemma \ref{inner2} applies with $t_0 = t_\delta$ and we conclude that $d(t) \leq \kappa \delta$ for all $t \in \left[ t_\delta, T \right]$. In particular, $d(T) \leq \kappa \delta$ for $\delta$ arbitrarily small. This is a contradiction.
\end{proof}

\noindent
{\bf Proof of Theorems \ref{strong} and \ref{strong2}.} 
Since $U$ is the unique entropy solution of (\ref{introcl}), the inequality (\ref{relative3}) holds in the sense of measures for any strictly convex entropy $\eta$. Thus, given $x_L$ and $x_R$ defined by (\ref{odexlr}), the estimates in Section \ref{prelim} are valid, and we deduce from Proposition \ref{nopass} that there exists a Lipschitz continuous function $\overline{x}: [0,\infty) \to \rit$ such that $x_R(t) \leq  \overline{x}(t) + \sigma t \leq x_L(t)$ for all $t \geq 0$. Also, since $U^0 -\phi \in L^2(\rit)$, we have 
\begin{align*}
\int_{-\infty}^{\infty} \eta (U^0(x) \mid \phi(x)) \,dx \leq \frac{L_{\eta}}{2} \int_{-\infty}^{\infty} (U^0(x) - \phi(x))^2 \,dx < \infty,
\end{align*}
where $L_{\eta} > 0$ is an upper bound on $\eta^{\prime \prime}$. Therefore, $\mathcal{E}(0)$, defined by (\ref{outentropy}), is finite and Lemma \ref{entropybound} implies
\begin{align*}
\int_{-\infty}^{\infty} \eta (U(x , t) \mid \phi(x -\sigma t - \overline{x}(t))) \,dx \leq \mathcal{E}(t) \leq \mathcal{E}(0) = \int_{-\infty}^{\infty} \eta (U^0(x) \vert \phi(x)) \,dx,
\end{align*}
which completes the proof of estimate (\ref{mainestimate2}). Also, this gives (\ref{mainestimate}), for Theorem \ref{strong}, in the case $\eta(U) = U^2$.

Finally, let us show that the function $\overline{x}$ has a bound proportional to the size of $U^0 - \phi$ in $L^2(\rit)$. Recalling that $x_R(t) \leq  \overline{x}(t) + \sigma t \leq x_L(t)$, we have 
\begin{align*}
\vert \overline{x}(t) + \sigma t \vert \leq \max \left\{ \vert x_R(t) \vert, \vert  x_L(t) \vert \right\} \leq Lt,
\end{align*}
since $x_L$ and $x_R$ are Lipschitz (Proposition \ref{flpv}) and $x_L(0) = x_R(0) = 0$. Note that $L$ depends on 
$\paral U^0 \paral_{L^\infty}$, $\phi$, $\eta$ and $A$, as the velocities $\dot{x_L}$ and $\dot{x_R}$ are given by (\ref{entropyflow}). 

Next, observe that $\phi(x - \sigma t - \overline{x}(t)) - \phi(x - \sigma t)$ has support contained in the interval $\left[ -(L+\vert \sigma \vert) t, (L+\vert \sigma \vert) t \right]$. Therefore,
\begin{align}\label{shiftbound}
(C_L - C_R) \vert \overline{x}(t) \vert &= \paral \phi(\cdot - \sigma t - \overline{x}(t)) - \phi(\cdot - \sigma t) \paral_{L^1(B_{(L+\vert \sigma \vert) t})} \notag \\
&\leq \paral \phi(\cdot - \sigma t - \overline{x}(t)) - U(\cdot, t) \paral_{L^1(B_{(L+\vert \sigma \vert) t})} \notag \\
&+ \paral U(\cdot, t) - \phi(\cdot - \sigma t) \paral_{L^1(B_{(L+\vert \sigma \vert) t})}
\end{align}
Then, by the $L^1$-stability theory of Kru\v{z}kov, the last term is bounded by $\paral U^0 - \phi \paral_{L^1(B_{(M+L+\vert \sigma \vert) t})}$, where $M = \sup \left\{ \vert A^{\prime}(w) \vert; \vert w \vert \leq \paral U^0 \paral_{L^\infty} +\paral \phi \paral_{L^\infty} \right\}$. Therefore, proceeding with the estimate (\ref{shiftbound}), using H\"{o}lder's inequality, we get
\begin{align}\label{shiftbound2}
(C_L - C_R) \vert \overline{x}(t) \vert & \leq \sqrt{2(L+\vert \sigma \vert) t} \cdot \paral \phi(\cdot - \sigma t - \overline{x}(t)) - U(\cdot, t) \paral_{L^2(\rit)} \notag \\
&+ \sqrt{2(M + L+\vert \sigma \vert) t} \cdot \paral U^0 - \phi \paral_{L^2(\rit)}
\end{align}
Finally, since $\frac{\eps_{\eta}}{2}(U-\phi)^2 \leq \eta(U \mid \phi) \leq \frac{L_{\eta}}{2}(U-\phi)^2$, using (\ref{mainestimate2}) we have
\begin{align*}
\paral \phi(\cdot - \sigma t - \overline{x}(t)) - U(\cdot, t) \paral_{L^2(\rit)} \leq \sqrt{\frac{L_\eta}{\eps_\eta}} \cdot \paral U^0 - \phi \paral_{L^2(\rit)},
\end{align*}
which together with (\ref{shiftbound2}) implies
\begin{align*}
\vert \overline{x}(t) \vert & \leq \frac{1}{C_L - C_R}\left[ \sqrt{\frac{2 L_\eta}{\eps_\eta}(L+\vert \sigma \vert)} + \sqrt{2(M + L+\vert \sigma \vert)} \right] \paral U^0 - \phi \paral_{L^2(\rit)} \sqrt{t}.
\end{align*}
This completes the proof. Note that, by construction, $\overline{x}$ is actually Lipschitz even though this estimate does not show it.
%%%%%%%%%%%%%%%%%%%%%%%%%%%%%%%%%%%%%%%%%%%%%%%%%%%%%%%%%%
\section{Acknowledgements}\label{thanks}
%%%%%%%%%%%%%%%%%%%%%%%%%%%%%%%%%%%%%%%%%%%%%%%%%%%%%%%%%
The author would like to thank his advisor, Alexis Vasseur, for suggesting the problem and for his constant encouragement and guidance. The author would also like to thank the (NSF-funded) Research Training Group in Applied and Computational Mathematics at the University of Texas at Austin for fellowship support during the preparation of this paper.
%The author would like to express his sincere thanks to his advisor, Alexis Vasseur, for suggesting the problem and %for his constant encouragement and guidance. 
%I would like to thank my advisor Alexis Vasseur for suggesting the problem and for his constant encouragement %and guidance. 
%%%%%%%%%%%%%%%%%%%%%%%%%%%%%%%%%%%%%%%%%%%%%%%%%%%%%%%%%%
\section{Appendix: Filippov Solutions and Conservation Laws}
%%%%%%%%%%%%%%%%%%%%%%%%%%%%%%%%%%%%%%%%%%%%%%%%%%%%%%%%%

In this section we include an existence result for differential equations arising in the context of conservation laws. While this result is an easy application of the theory of Filippov \cite{Filippov}, and by no means original, we have found no explicit statement of this kind in the literature.
\begin{definition}\label{flpvsoln} A solution of (\ref{odeflpv}) in the sense of Filippov on an interval $[t_0,T)$ is an absolutely continuous function $x(t)$ for which (\ref{odeflpv}) holds for almost every $t \in [t_0, T)$.
\end{definition}

\begin{proposition}\label{flpv}
Let $U$ be the unique entropy solution of (\ref{introcl}) with $A^{\prime \prime} > 0$ and $U^0 \in L^{\infty}(\rit)$. Also, let $g: \rit \to \rit$ be continuous and non-decreasing. Then for $(x_0, t_0) \in \rit \times [0,\infty)$ there exists a Lipschitz continuous function $x:[t_0, \infty) \to \rit$, with intial value $x(t_0) = x_0$, solving 
\begin{align}\label{odeflpv}
\dot{x}(t) \in [g(U(x(t)+,t)), g(U(x(t)-,t))],
\end{align} 
in the sense of Filippov (Definition \ref{flpvsoln}).
\noindent
Furthermore, if $t_0>0$ and $g$ is Lipschitz on bounded subsets of $\rit$, then the solution $x$ is unique.
\end{proposition}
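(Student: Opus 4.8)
The plan is to treat existence and uniqueness separately, invoking the classical Filippov theory \cite{Filippov} for the bulk of the work and then extracting the two special features we need—Lipschitz regularity of the solution and the fact that the Filippov set-valued map associated to a BV function of $x$ and $t$ is exactly the interval spanned by the one-sided traces.

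For existence, first I would fix a bounded time interval $[t_0, T]$ and note that by Oleinik's estimate (Remark \ref{remark1}) together with finite propagation speed, $U$ is bounded on a neighborhood of any prospective solution curve, say $|U| \leq R$; since $g$ is continuous it is bounded by some $M$ on $[-R,R]$, which will give the uniform Lipschitz bound $|\dot x| \leq M$. Consider the (possibly discontinuous) vector field $b(x,t) = g(U(x,t))$, defined a.e. Filippov's existence theorem asserts that the differential inclusion $\dot x \in \mathcal{F}[b](x,t)$, where $\mathcal{F}[b]$ is the smallest convex, upper-semicontinuous set-valued map containing $b$ a.e., admits an absolutely continuous solution through $(x_0,t_0)$; because $|b| \leq M$ the solution is Lipschitz, and it extends to all of $[t_0,\infty)$ since $[-M,M]$ is a global a priori bound on the derivative. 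The remaining point is to identify $\mathcal{F}[b](x,t)$ with $[g(U(x+,t)), g(U(x-,t))]$. Here I would use that $U(\cdot,t) \in BV_{loc}$ so that for fixed $t>0$ (and, via Vasseur's trace theorem, up to $t=0$) the only values $g(U(\cdot,t))$ attains in small balls around $x$ cluster precisely at $g(U(x\pm,t))$; since $g$ is non-decreasing, $g(U(x+,t)) \leq g(U(x-,t))$ and the convex hull of the essential cluster set is exactly that interval. A short measure-theoretic argument (the set where $U$ is discontinuous in space is countable for each $t$, and one needs the two-dimensional essential limsup/liminf) then upgrades this to the full $(x,t)$ statement. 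This identification step is the main obstacle: one must be careful that Filippov's construction uses the essential range of $b$ over full-measure neighborhoods in $\mathbb{R} \times \mathbb{R}$, not just spatial sections, and reconcile this with the one-sided traces, which are a priori only defined section-by-section.

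For uniqueness when $t_0 > 0$ and $g$ is locally Lipschitz, I would invoke Filippov's uniqueness criterion, which holds when the vector field satisfies a one-sided Lipschitz (dissipativity) condition: $\bigl(b(x,t) - b(y,t)\bigr)(x-y) \leq c\,|x-y|^2$ for $x \neq y$. Since $U(\cdot,t)$ has left and right limits everywhere and is essentially the quotient of its variation, and since $g$ is non-decreasing and locally Lipschitz, one checks that $g(U(x,t)) - g(U(y,t)) \leq L_g \bigl(U(x,t)-U(y,t)\bigr)$ whenever $x > y$ at points of approximate continuity; the key structural input is that $U(\cdot,t)$ has no increasing jumps and, along characteristics, only compressive behavior, so $U(x,t) - U(y,t)$ does not grow faster than linearly in $x - y$ from the right—this is where $t_0 > 0$ matters, since Oleinik's one-sided bound $U(x,t) - U(y,t) \leq \frac{x-y}{\alpha t}$ for $x > y$ is available only for $t > 0$. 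Feeding this into the one-sided Lipschitz estimate and applying Grönwall to $|x(t) - \tilde x(t)|$ for two solutions with the same initial value yields uniqueness. I would present this uniqueness argument as the shorter half, with the one-sided Lipschitz verification reduced to a direct consequence of Oleinik's estimate and the monotonicity and local Lipschitz continuity of $g$.
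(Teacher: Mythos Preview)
Your proposal is correct and follows the paper's approach closely: Filippov's existence theorem for the set-valued map, Lipschitz regularity from the $L^\infty$ bound on $g\circ U$, and the one-sided Lipschitz criterion for uniqueness via Oleinik's decay estimate $U(x,t)-U(y,t)\leq \tfrac{x-y}{\alpha t}$ for $x>y$, $t>0$ (which indeed yields $\ell(t)=\tfrac{L_g}{\alpha t}\in L^1([t_0,T])$ precisely when $t_0>0$).

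The one place you make life harder than the paper is in the existence step. You start from the discontinuous field $b(x,t)=g(U(x,t))$, form Filippov's essential convex hull $\mathcal{F}[b]$, and then must \emph{identify} it with the interval $[g(U(x+,t)),g(U(x-,t))]$; you rightly flag the delicate point that $\mathcal{F}[b]$ is built from full two-dimensional neighborhoods rather than spatial sections. The paper sidesteps this entirely: it simply \emph{defines} the set-valued right-hand side to be $G(x,t)=[g(U(x+,t)),g(U(x-,t))]$ and checks directly that $G$ is upper semicontinuous, citing the fine structure of entropy solutions from \cite{Dafermos1}. Filippov's theorem then applies to $G$ itself, with no identification needed. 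This is a genuine simplification and I would recommend adopting it; the identification you worry about is true but is not required for the proof.
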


\begin{proof}
The existence of solutions follows from \cite{Filippov} [Section 2.7, Theorem 1] provided the set valued function $G(x,t) = [g(U(x+,t)), g(U(x-,t))]$ is upper semicontinuous. Roughly this means that in the limit $(x^{\prime}, t^{\prime}) \to (x, t)$, the sets $G(x^{\prime}, t^{\prime})$ will be contained in $G(x,t)$. In the present setting, upper semi-continuity is an immediate consequence of the continuity of $g$ and the structure of solutions to (\ref{introcl}) detailed in \cite{Dafermos1} (see also Remark \ref{remark1}). Also, since $g \circ U \in L^{\infty}$, the Filippov solutions are Lipschitz and defined for all $t \geq t_0$. 

Now assume additionally that g is Lipschitz on bounded subsets of $\rit$, and let us verify that solutions extend uniquely beyond $t=0$. Applying \cite{Filippov} [Section 2.10, Theorem 1], it suffices to check that for any $T>0$ there exists $\ell \in L^1([t_0, T])$ such that for any almost every $(x,t)$ and $(y,t)$ in $\rit \times [t_0, T]$
\[ (x-y) \cdot (g(U(x,t)) - g(U(y,t))) \leq \ell (t) \vert x-y \vert^2. \]
\noindent
The assumptions on $g$ together with Oleinik's well-known decay estimate easily imply the statement above.
\end{proof}

Existence results of this type have appeared implicitly in the work of Marson and Colombo (see \cite{Colombo} and \cite{Marson}) on ODEs related to traffic modeling. In fact, the uniqueness argument above can be found directly in \cite{Colombo} for the case of concave flux functions. The study of contingent equations in the context of conservation laws can also be found in the papers \cite{Bressan2, Dafermos1, Dafermos5}.

%%%%%%%%%%%%%%%%%%%%%%%%%%%%%%%%%%%%%%%%%%%%%%%%%%%%%%%%%%
\bibliography{entropy}
%%%%%%%%%%%%%%%%%%%%%%%%%%%%%%%%%%%%%%%%%%%%%%%%%%%%%%%%%

\end{document}